\documentclass{amsart}
\usepackage{bullclass,graphicx}
\usepackage{mathrsfs}

\usepackage{amsthm,cite}

\newcommand{\UCV}{\mathcal{UCV}}

\newcommand{\al}{\alpha}
\newcommand{\s}{\mathcal{SL}}
\newcommand{\p}{\mathcal{S}_{\mathcal{P}}}
\newcommand{\U}{\mathcal{UCV}}
\newcommand{\m}{\mathcal{M}(\beta)}
\newcommand{\C}{\mathcal{C}(\alpha)}
\newcommand{\T}{\mathcal{S}^*(\alpha)}
\DeclareMathOperator{\RE}{Re}

\begin{document}
\title[On the Radius Constants for Classes of Analytic Function] {On the Radius Constants for Classes of Analytic Functions}

\author[R. M. Ali, N. K. Jain \and V. Ravichandran]{\first Rosihan M. Ali, \second Naveen Kumar Jain \and \third V. Ravichandran}

\address{\first \andthird  School of Mathematical Sciences, Universiti Sains Malaysia, 11800 USM, Penang, Malaysia\\
 \second \andthird Department of Mathematics, University of Delhi, Delhi 110007, India\\}
\email{\first rosihan@cs.usm.my, \second naveenjain05@gmail.com, \third vravi@maths.du.ac.in}

\thanks{The work presented here was supported in parts by a Research University grant from Universiti Sains Malaysia, and by a research fellowship from
the Council of Scientific and Industrial Research, New Delhi.}

\begin{abstract}

Radius constants for several classes of analytic functions on the unit disk are obtained. These include the radius of starlikeness of a positive order, radius of parabolic starlikeness, radius of Bernoulli lemniscate starlikeness, and radius of uniform convexity. In the main, the radius constants obtained are sharp. Conjectures on the non-sharp constants are given.
\end{abstract}

\subjclass[2010]{30C45, 30C80}
\keywords{Starlike functions, close-to-convex, parabolic starlike, lemniscate of Bernoulli, radius of starlikeness, radius of uniform convexity.}

\recdate{June 1,  2012??}{August  16, 2012??}{}{Anton Abdulbasah Kamil}

\maketitle

\section{Introduction} This paper studies the class $\mathcal{A}$ of analytic functions $f$ in $\mathbb{D} =\{z\in\mathbb{C}: |z| <1\}$
normalized by $f(0)=0=f'(0)-1$. Let $\mathcal{S}$ be its subclass consisting of univalent functions. For $0\leq\alpha<1$, let $\mathcal{S^*}(\alpha)$  and $\mathcal{C}(\alpha) $ be the subclasses of $\mathcal{S}$ consisting respectively of functions starlike of order $\alpha$ and convex   of order $\alpha$. These are functions respectively characterized by $\text{\textrm{Re}}\ zf'(z)/ f(z)> \alpha$ and $1 + \text{\textrm{Re}} \
 zf''(z)/ f'(z)> \alpha$. The usual classes of starlike and convex functions are denoted by $\mathcal{S^*}:=\mathcal{S^*}(0)$  and $\mathcal{C}:=\mathcal{C}(0)$.

The Koebe function $k(z)=z/(1-z)^2$, which maps $\mathbb{D}$ onto the region $\mathbb{C}\setminus \{ w: w\leq -1/4\}$, is starlike but not convex. However, it is known that $k$ maps the disk $\mathbb{D}_r:=\{z\in\mathbb{D} : |z|<r \}$ onto a convex domain for every $r\leq 2-\sqrt{3}$. Indeed, every univalent function $f\in \mathcal{S}$ maps $\mathbb{D}_r$ onto a convex region for $r\leq 2-\sqrt{3}$ \cite[Theorem 2.13, p.\ 44]{duren}. This number is called the radius of convexity for $\mathcal{S}$.

In general, for two families $\mathcal{G}$ and $\mathcal{F}$ of $\mathcal{A}$, the \emph{$\mathcal{G}$-radius} of $\mathcal{F}$, denoted by $R_\mathcal{G}(\mathcal{F})$, is the largest number $R$ such that $r^{-1}f(rz)\in\mathcal{G}$ for $0< r\leq R$, and $f \in \mathcal{F}$. Whenever $\mathcal{G}$ is characterized by possesing a geometric property $P$, the number $R$ is also referred to as the radius of property $P$ for the class $\mathcal{F}$.

Several other subclasses of $\mathcal{A}$ and $\mathcal{S}$ are also of great interest. In \cite{kaplan}, Kaplan introduced the close-to-convex functions $f\in \mathcal{A}$ satisfying $f'(z)\neq0$ and $\RE (f'(z)/g'(z))>0$ for some (not necessarily normalized) convex function $g$. In his investigation on the Bieberbach conjecture for close-to-convex functions, Reade \cite{reade} introduced the class of close-to-starlike functions. These are functions $f\in \mathcal{A}$ with $f(z)\neq0$ in $\mathbb{D}\setminus\{0\}$ and $\RE (f(z)/g(z))>0$ for a (not necessarily normalized) starlike function $g$. Close-to-convex functions are known to be univalent, but close-to-starlike functions need not.  There are various other studies on classes of functions in $\mathcal{A}$ characterized by the ratio between functions $f$ and $g$ belonging to particular subclasses of $\mathcal{A}$ \cite{bajpai,bhargava,causey,chen,goel,goelb,goel72,krzyz,ksp1,ksp2,mac2,mac1,mac4,mac3,ratti68,ratti70,%
ratti80,reade,reade65,singh,vravicmft,shah,shukla,tuan}.

Radius constants have been obtained for several of these subclasses. In \cite{mac4, mac3}, MacGregor obtained the radius of starlikeness for the class of functions $f\in\mathcal{A}$ satisfying either
\begin{equation}\label{ee3}
\RE\left(\frac{f(z)}{g(z)}\right)>0
\quad \text{or}\quad
 \left|\frac{f(z)}{g(z)}-1\right|<1
\end{equation} in $ \mathbb{D}$
for some $g\in\mathcal{C}$.  Ratti \cite{ratti68} determined the radius of starlikeness for functions $f$ belonging to a variant class of \eqref{ee3}.
In \cite{mac2},   MacGregor  found the radius of convexity for univalent functions satisfying $|f'(z)-1|<1$, while Ratti \cite{ratti70} established the radius for functions $f$  satisfying
\begin{equation*}
 \left|\frac{f'(z)}{g'(z)}-1\right|<1\quad ( z\in \mathbb{D})
\end{equation*}
when $g$ belongs to certain classes of analytic functions.

This paper finds radius constants for several classes of functions $f\in \mathcal{A}$ characterized by its ratio with a certain function $g$. In the following section, the classes consisting of uniformly convex functions, parabolic starlike functions, and Bernoulli lemniscate starlike functions will be brought fore to attention. In the main, the real part of the involved expressions lie in the right half-plane, and so in Section 1.2, we shall gather certain results involving functions of positive real part that will be required. Section 2 contains the main results involving the radius of Bernoulli lemniscate starlikeness, radius of starlikeness of positive order, and radius of parabolic starlikeness for several classes. These include the subclasses satisfying one of the conditions: (i)  $\RE(f(z)/g(z))>0$ where $\RE (g(z)/z)>0$ or $\RE(g(z)/z)>1/2$, (ii) $ \left|(f(z)/g(z))-1\right|$ $<1$ where $\RE(g(z)/z)>0$ or $g$ is convex, and (iii)  $ \left|(f'(z)/g'(z))-1\right|<1$  where $\RE g'(z)>0$. Section 3 is devoted to finding the radius of uniform convexity for the classes $ \left|(f'(z)/g'(z))-1\right|<1$, and $g$ is either univalent, starlike or convex.

\subsection{Subclasses of univalent functions}\label{sec11}
This section highlights certain important subclasses of $f\in\mathcal{S}$ that will be referred to in the sequel.
A function $f\in\mathcal{S}$ is \emph{uniformly convex}  if for every
circular arc $\gamma$ contained in $\mathbb{D}$ with center
$\zeta\in \mathbb{D}$, the image arc $f(\gamma)$ is convex. The class $\mathcal{UCV}$
of all uniformly convex  functions  was introduced by Goodman  \cite{ucv}.
In two separate papers, R\o nning \cite{ronn} and Ma and Minda \cite{maminda-ucv}
independently proved that
\begin{equation*}\label{e13}
f\in \mathcal{UCV} \Longleftrightarrow \RE \left( 1 +
\frac{zf''(z)}{f'(z)} \right) > \left|\frac{zf''(z)}{f'(z)} \right|
\quad ( z\in \mathbb{D}).
\end{equation*}
R\o nning introduced a corresponding class of starlike functions
called parabolic starlike functions. These are functions $f\in\mathcal{A}$ satisfying
\begin{equation*}\label{eq17} \RE\left(\frac{zf'(z)}{f(z)} \right)
>\left|\frac{zf'(z)}{f(z)}-1 \right|\quad ( z\in \mathbb{D}).
  \end{equation*}
Denote the class of such functions by $\mathcal{S}_\mathcal{ P}$. It is  evident that $f\in \UCV$ if and only if $zf'(z)\in \p$. A recent survey on these classes can be found in \cite{raviucv}.  The class $ \mathcal{S}_\mathcal{ L}$, introduced by
Sok\'o\l\ and Stankiewicz \cite{sokol96}, consists of functions $f\in\mathcal{A}$ satisfying the inequality
\[
 \left|\left(\frac{zf'(z)}{f(z)}\right)^2-1\right|<1 \quad ( z\in \mathbb{D}).
\]
Thus a function $f$ is in the class $\mathcal{S}_\mathcal{ L}$  if $zf'(z)/f(z)$ lies in the
region bounded by the right-half of the lemniscate of Bernoulli
given by $|w^2-1|<1$. Results related to the  class  $\mathcal{S}_\mathcal{ L}$ can be found in \cite{nav,ali1,sokol09,sokol09b}.  Another class  $ \mathcal{M}(\beta)$, $\beta > 1$, consisting of functions $f\in\mathcal{A}$ satisfying
\[  \RE\left( \frac{zf'(z)}{f(z)}  \right) < \beta\quad ( z\in \mathbb{D}), \]   was investigated by Uralegaddi
{\em et al.} \cite{ural}  and  Owa and Srivastava \cite{owa2}.

\subsection{On functions with positive real part}\label{sec12}
For $ 0\leq \alpha<1$, let $\mathcal{P}(\alpha)$  denote the class of functions $p(z)=1+c_1z+\cdots $ satisfying the inequality $\RE(p(z))>\alpha$ in $\mathbb{D}$ and write $ \mathcal{P}:=\mathcal{P}(0)$. This class is related to various subclasses in  $\mathcal{S}$ as will be evident in the next section. The following results for functions in $\mathcal{P}(\alpha)$ will be required in the sequel.

\begin{lemma}\label{lm5}{\rm\cite{vraviron}}
Let $p\in\mathcal{P}(\alpha)$, and $|z|=r<1$. Then
\[\left|p(z)-\frac{1+(1-2\al)r^2}{1-r^2}\right|\leq\frac{2(1-\al)r}{1-r^2}  .\]
\end{lemma}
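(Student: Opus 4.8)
The plan is to reduce the statement to the classical case $\alpha=0$ by an affine normalization, and then to invoke the subordination of functions of positive real part to the half-plane map. First I would put $q(z)=(p(z)-\alpha)/(1-\alpha)$. Since $p(0)=1$ and $\RE p(z)>\alpha$ in $\mathbb{D}$, the function $q$ satisfies $q(0)=1$ and $\RE q(z)>0$, i.e. $q\in\mathcal{P}$. By the Schwarz lemma in its subordination form, $q$ is subordinate to the conformal map $L(z)=(1+z)/(1-z)$ of $\mathbb{D}$ onto the right half-plane; writing $q=L\circ\omega$ with $\omega$ a Schwarz function, one has $|\omega(z)|\le|z|$, so $q(\mathbb{D}_r)\subseteq L(\overline{\mathbb{D}_r})$ for every $r<1$.

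The next step is to identify $L(\overline{\mathbb{D}_r})$ explicitly. Because $L$ is a Möbius transformation it carries the circle $|z|=r$ onto a circle, and since $L$ maps the real diameter $[-r,r]$ onto the real segment with endpoints $(1-r)/(1+r)$ and $(1+r)/(1-r)$ — a diameter of the image circle — a one-line computation gives that $L(\overline{\mathbb{D}_r})$ is the closed disk centered at $(1+r^2)/(1-r^2)$ with radius $2r/(1-r^2)$. Hence for $|z|=r$,
\[
\left| q(z)-\frac{1+r^2}{1-r^2}\right|\le\frac{2r}{1-r^2}.
\]

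Finally I would undo the normalization. Since $p(z)=(1-\alpha)q(z)+\alpha$, we have
\[
p(z)-\left(\alpha+(1-\alpha)\frac{1+r^2}{1-r^2}\right)=(1-\alpha)\left(q(z)-\frac{1+r^2}{1-r^2}\right),
\]
the radius gets multiplied by $1-\alpha$, and the new center simplifies via $\alpha(1-r^2)+(1-\alpha)(1+r^2)=1+(1-2\alpha)r^2$, giving exactly the claimed estimate. The only genuine content here is the subordination step together with the elementary determination of the image disk $L(\overline{\mathbb{D}_r})$; the remaining manipulations are routine bookkeeping, so I do not anticipate a serious obstacle — the main point to get right is simply the center and radius of that image disk.
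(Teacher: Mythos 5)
Your argument is correct: the normalization $q=(p-\alpha)/(1-\alpha)$ puts $q$ in $\mathcal{P}$, the subordination $q\prec(1+z)/(1-z)$ together with the Schwarz lemma confines $q(z)$ for $|z|=r$ to the image disk, and your computation of that disk's center $(1+r^2)/(1-r^2)$ and radius $2r/(1-r^2)$, as well as the affine bookkeeping back to $p$, are all accurate. The paper states this lemma without proof, quoting it from the reference \cite{vraviron}; your subordination argument is the standard derivation of that result, so there is no substantive divergence to report.
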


\begin{lemma}\label{lm3}{\rm\cite{shah}}
Let $p\in\mathcal{P}(\alpha)$, and $|z|=r<1$. Then
\[\left|\frac{zp'(z)}{p(z)}\right|\leq\frac{2r(1-\alpha)}{(1-r)[1+(1-2\alpha)r]} .\]
\end{lemma}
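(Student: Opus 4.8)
The plan is to combine the subordination description of $\mathcal{P}(\alpha)$ with the Schwarz--Pick inequality and a short elementary optimization. Since $p\in\mathcal{P}(\alpha)$ we have $p\prec P_\alpha$, where $P_\alpha(w)=(1+(1-2\alpha)w)/(1-w)$ maps $\mathbb{D}$ conformally onto the half-plane $\RE w>\alpha$; hence $p=P_\alpha\circ\omega$ for some Schwarz function $\omega$ (analytic in $\mathbb{D}$, $\omega(0)=0$, $|\omega(z)|\le|z|$). Using $P_\alpha'(w)=2(1-\alpha)/(1-w)^2$, differentiation gives
\[
\frac{zp'(z)}{p(z)}=\frac{2(1-\alpha)\,z\omega'(z)}{(1-\omega(z))(1+(1-2\alpha)\omega(z))}.
\]
By the Schwarz--Pick inequality $|\omega'(z)|\le(1-|\omega(z)|^2)/(1-|z|^2)$, and by Schwarz's lemma $|\omega(z)|\le|z|=r$; writing $t=|\omega(z)|\le r$, this yields $|z\omega'(z)|\le r(1-t^2)/(1-r^2)$.

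Next I would bound the denominator from below, the key assertion being that, with $\beta=1-2\alpha\in(-1,1]$,
\[
|1-\omega(z)|\,|1+(1-2\alpha)\omega(z)|\ \ge\ (1-t)(1+(1-2\alpha)t),
\]
the minimum over $|\omega(z)|=t$ being attained at $\omega(z)=t$. This is a one-variable problem: putting $x=\RE\omega(z)\in[-t,t]$, the square of the left-hand side equals $(1+t^2-2x)(1+\beta^2t^2+2\beta x)$. For $\beta\le0$ this is a product of two linear factors that are positive and nonincreasing on $[-t,t]$, hence nonincreasing, with minimum $(1-t)^2(1+\beta t)^2$ at $x=t$; for $0<\beta\le1$ it is a concave quadratic in $x$, so its minimum on $[-t,t]$ is attained at an endpoint, and $(1-t)^2(1+\beta t)^2\le(1+t)^2(1-\beta t)^2$ since $\beta\le1$. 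In all cases the minimum is $(1-t)^2(1+\beta t)^2$.

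Combining the two estimates would give
\[
\left|\frac{zp'(z)}{p(z)}\right|\ \le\ \frac{2(1-\alpha)\,r(1-t^2)}{(1-r^2)(1-t)(1+(1-2\alpha)t)}\ =\ \frac{2(1-\alpha)\,r(1+t)}{(1-r^2)(1+(1-2\alpha)t)}.
\]
Since $(1+t)(1+(1-2\alpha)r)-(1+r)(1+(1-2\alpha)t)=2\alpha(t-r)\le0$, the function $t\mapsto(1+t)/(1+(1-2\alpha)t)$ is nondecreasing on $[0,r]$; replacing $t$ by $r$ and cancelling the factor $1+r$ produces the claimed inequality. Equality occurs for $p(z)=(1+(1-2\alpha)z)/(1-z)$ at $z=r$, so the bound is sharp.

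The one step that is not routine is the lower bound for the denominator: over the circle $|\omega(z)|=t$ the factors $|1-\omega(z)|$ and $|1+(1-2\alpha)\omega(z)|$ attain their minima at antipodal points (namely $\omega(z)=t$ and, when $\alpha<1/2$, $\omega(z)=-t$), so they must not be estimated separately, and one analyses the product instead. Both hypotheses $\alpha<1$ (which keeps $1+(1-2\alpha)t>0$, so that the displayed quantities are genuinely positive) and $\alpha\ge0$, i.e.\ $\beta\le1$, are used in carrying this out.
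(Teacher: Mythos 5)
The paper does not actually prove this lemma --- it is quoted from Shah \cite{shah} --- so there is no internal proof to measure your argument against; it has to stand on its own, and it does. The logarithmic-derivative identity
\[
\frac{zp'(z)}{p(z)}=\frac{2(1-\alpha)\,z\omega'(z)}{(1-\omega(z))\bigl(1+(1-2\alpha)\omega(z)\bigr)}
\]
is correct (the numerator of $P_\alpha'$ collapses to the constant $2(1-\alpha)$), and the Schwarz--Pick bound $|z\omega'(z)|\le r(1-t^2)/(1-r^2)$ with $t=|\omega(z)|\le r$ is applied legitimately, since both numerator and denominator estimates are taken at the same fixed value of $t$ before optimizing over $t\in[0,r]$. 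The one genuinely delicate step, the lower bound $|1-\omega|\,|1+\beta\omega|\ge(1-t)(1+\beta t)$ with $\beta=1-2\alpha$, checks out: the squared product is $(1+t^2-2x)(1+\beta^2t^2+2\beta x)$ in $x=\RE\omega\in[-t,t]$, which for $\beta\le0$ is a product of positive nonincreasing linear factors (positivity holding because $|\beta|t<1$ and $t<1$), and for $0<\beta\le1$ is a concave quadratic whose endpoint comparison reduces to $2t(1-\beta)\ge0$; in both cases the minimum $(1-t)^2(1+\beta t)^2$ sits at $x=t$. The final monotonicity of $t\mapsto(1+t)/(1+\beta t)$, with increment $(t-r)(1-\beta)=2\alpha(t-r)\le0$, again uses $\alpha\ge0$ correctly, and sharpness at $p=P_\alpha$, $z=r$ is immediate. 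Your route via subordination and Schwarz--Pick is the natural modern one and is arguably cleaner than estimating $|p'|$ and $|p|$ separately (which fails here, as you note, because the two factors of the denominator are minimized at antipodal points); it also makes the sharpness transparent. No gaps.
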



\begin{lemma}\label{lm7}{\rm \cite[Lemma 2.4]{causey}}
Let $p\in\mathcal{P}(1/2)$, and $|z|=r<1$. Then
\[\RE \frac{zp'(z)}{p(z)} \geq\begin{cases}
 - r/(1+r), &  r< 1/3,\\
 - (\sqrt{2}-\sqrt{1-r^2})^2/(1-r^2), &  1/3 \leq r\leq \sqrt{8\sqrt{2}-11}\approx0.56.\end{cases}
 \]
\end{lemma}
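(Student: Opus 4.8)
The plan is to represent members of $\mathcal{P}(1/2)$ by a Schwarz function and then reduce the desired inequality to an explicit minimization in two real variables. Since $w\mapsto 1/(1-w)$ maps $\mathbb{D}$ univalently onto the half-plane $\RE w>1/2$, a normalized $p$ with $p(0)=1$ lies in $\mathcal{P}(1/2)$ if and only if $p(z)=1/(1-\omega(z))$ for some Schwarz function $\omega$ (so $\omega(0)=0$ and $|\omega(z)|\le|z|$ in $\mathbb{D}$), whence $zp'(z)/p(z)=z\omega'(z)/(1-\omega(z))$. I would also first record the sharpened Schwarz-lemma estimate
\[
\bigl|z\omega'(z)-\omega(z)\bigr|\le\frac{r^{2}-|\omega(z)|^{2}}{1-r^{2}}\qquad(|z|=r<1),
\]
which follows by applying the Schwarz--Pick inequality to $\phi(z)=\omega(z)/z$, a map of $\mathbb{D}$ into $\overline{\mathbb{D}}$ by the Schwarz lemma, together with the identity $z\omega'(z)-\omega(z)=z^{2}\phi'(z)$.

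Writing $w=\omega(z)$ and $\rho=|w|\le r$, the elementary bound $\RE A\ge\RE B-|A-B|$ applied to $A=z\omega'(z)/(1-w)$ and $B=w/(1-w)$ now gives
\[
\RE\frac{zp'(z)}{p(z)}\;\ge\;\RE\frac{w}{1-w}-\frac{r^{2}-\rho^{2}}{(1-r^{2})\,|1-w|},
\]
and the right-hand side depends only on $\rho$ and on $t:=\cos(\arg w)\in[-1,1]$; call it $F(\rho,t)$, to be minimized over $[0,r]\times[-1,1]$. The key step is to show that $F$ is nondecreasing in $t$, so that the minimum is attained at $t=-1$, i.e.\ at $w=-\rho$. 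For $\rho>0$ the sign of $\partial F/\partial t$ equals that of $(1-\rho^{2})/|1-w|-(r^{2}-\rho^{2})/(1-r^{2})$; since $|1-w|$ decreases in $t$, this is smallest at $t=-1$, where it equals $(1-\rho)-(r^{2}-\rho^{2})/(1-r^{2})$, and its nonnegativity rearranges to the quadratic inequality $\rho^{2}-(1-r^{2})\rho+(1-2r^{2})\ge0$. The discriminant of this quadratic is $r^{4}+6r^{2}-3$, which is negative for $r\le\sqrt{2\sqrt3-3}\approx0.68$, so the inequality holds identically there, in particular throughout the stated range $r\le\sqrt{8\sqrt2-11}\approx0.56$. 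I expect this monotonicity-in-argument step to be the main obstacle, since it is the one place where the half-plane geometry of $\mathcal{P}(1/2)$ is genuinely exploited and where a restriction on $r$ has to appear.

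Once $t=-1$ is in hand, $F(\rho,-1)=-G(\rho)/(1-r^{2})$ with $G(\rho)=(1-\rho)(\rho+r^{2})/(1+\rho)$, so it only remains to maximize $G$ over $[0,r]$. Since $G'(\rho)=(1-2\rho-\rho^{2}-2r^{2})/(1+\rho)^{2}$ has its unique positive zero at $\rho^{\ast}=\sqrt{2(1-r^{2})}-1$, two cases occur. If $r\le1/3$ then $\rho^{\ast}\ge r$, so $G$ is increasing and $\max G=G(r)=r(1-r)$, which gives $\RE(zp'(z)/p(z))\ge-r/(1+r)$. If $1/3\le r$ (and $r<1/\sqrt2$, valid on the stated range) then $0<\rho^{\ast}<r$, and the relations $1+\rho^{\ast}=\sqrt{2(1-r^{2})}$ and $(\rho^{\ast})^{2}+2\rho^{\ast}+2r^{2}=1$ give $G(\rho^{\ast})=(\rho^{\ast})^{2}+r^{2}=(\sqrt2-\sqrt{1-r^{2}})^{2}$, which gives the bound $-(\sqrt2-\sqrt{1-r^{2}})^{2}/(1-r^{2})$. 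The two formulas agree at $r=1/3$ (common value $-1/4$), the endpoint $r=\sqrt{8\sqrt2-11}$ is precisely where the second equals $-1/2$, and $p(z)=1/(1+z)$ shows the first bound is sharp for $r<1/3$.
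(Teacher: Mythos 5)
Your proposal is correct. Note first that the paper itself offers no proof of this lemma --- it is quoted verbatim from Causey and Merkes \cite[Lemma 2.4]{causey} --- so there is no argument in the text to compare against; judged on its own, your derivation is complete and all the computations check out. The representation $p=1/(1-\omega)$ with $\omega$ a Schwarz function correctly characterizes $\mathcal{P}(1/2)$, the bound $|z\omega'(z)-\omega(z)|\le (r^2-|\omega(z)|^2)/(1-r^2)$ is the standard Schwarz--Pick estimate applied to $\omega(z)/z$, and I verified the key identities: $\partial F/\partial t=\rho\,|1-w|^{-3}\bigl[(1-\rho^2)|1-w|^{-1}\cdot|1-w|^{2}\cdot|1-w|^{-1}\bigr]$-type factorization reduces, as you say, to the sign of $(1-\rho^2)/|1-w|-(r^2-\rho^2)/(1-r^2)$; the worst case $t=-1$ gives the quadratic $\rho^2-(1-r^2)\rho+(1-2r^2)\ge0$ with discriminant $r^4+6r^2-3<0$ for $r<\sqrt{2\sqrt3-3}\approx0.68$, comfortably covering the stated range; $F(\rho,-1)=-(1-\rho)(\rho+r^2)/\bigl((1+\rho)(1-r^2)\bigr)$; the critical point $\rho^{\ast}=\sqrt{2(1-r^2)}-1$ satisfies $\rho^{\ast}\ge r$ exactly when $(1-3r)(1+r)\ge0$, i.e.\ $r\le1/3$; and $G(\rho^{\ast})=(\rho^{\ast})^2+r^2=(\sqrt2-\sqrt{1-r^2})^2$ follows from $(\rho^{\ast})^2+2\rho^{\ast}+2r^2=1$. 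The two branches agree at $r=1/3$ with common value $-1/4$, and $p(z)=1/(1+z)$ attains equality in the first branch. The one point worth flagging is that the restriction $r\le\sqrt{8\sqrt2-11}$ in the lemma is not forced by your argument (your monotonicity step works up to $\approx0.68$ and the second branch formula up to $r<1/\sqrt2$); that upper limit is there only because it is where the bound reaches $-1/2$, which is what the applications in the paper need.
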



\begin{lemma}\label{lm4}{\rm\cite{nav}}  Let $0 < a< \sqrt{2}$. If  $r_a$ is given by
\[ r_a= \begin{cases} \left(\sqrt{1-a^2}-(1-a^2)\right)^{1/2}, &\quad  0 <a \leq 2 \sqrt{2}/3 \\
\sqrt{2}-a, & \quad 2 \sqrt{2}/3 \leq a <\sqrt{2},
\end{cases} \]  
then  \[ \{ w: |w-a|< r_a\} \subseteq \{w: |w^2-1|<1\}.\] 
\end{lemma}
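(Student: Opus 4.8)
The plan is to compute the Euclidean distance from the real point $a$ to the boundary curve $|w^2-1|=1$, show it equals $r_a$, and conclude. Since $0<a<\sqrt2$ forces $|a^2-1|<1$, the point $a$ lies in the region $\{w:|w^2-1|<1\}$; if in addition the open disk $\{w:|w-a|<r_a\}$ meets neither the boundary curve (distance $=r_a$) nor, as a by-product, the origin, then by connectedness it stays inside the component of $\{w:|w^2-1|<1\}$ containing $a$, which is precisely the right-hand loop of the lemniscate.

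First I would parametrize the curve. Writing $w^2=1+e^{i\phi}$ with $\phi\in(-\pi,\pi)$ describes every point with $|w^2-1|=1$, the right loop corresponding to $\RE w>0$. Put $t=\cos(\phi/2)\in(0,1]$. Then $|w|^2=|1+e^{i\phi}|=2t$ and $\RE(w^2)=1+\cos\phi=2t^2$, so $(\RE w)^2=\tfrac12\bigl(|w|^2+\RE(w^2)\bigr)=t+t^2$ and $\RE w=\sqrt{t+t^2}$ on the right loop (points on the left loop give $\RE w=-\sqrt{t+t^2}$ and hence a strictly larger distance to $a$, so they may be ignored; by symmetry in the real axis the distance depends on $\phi$ only through $t$). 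Therefore the squared distance from $a$ to such a boundary point is
\[
D(t)=|w-a|^2=|w|^2-2a\,\RE w+a^2=2t-2a\sqrt{t+t^2}+a^2,\qquad t\in(0,1],
\]
and the task reduces to minimizing $D$ over $(0,1]$.

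Next I would locate the critical points. From $D'(t)=2-a(1+2t)/\sqrt{t+t^2}$ the equation $D'(t)=0$ squares to $a^2(1+2t)^2=4(t+t^2)$; setting $s=t+t^2$ and using $(1+2t)^2=1+4s$ gives $s=a^2/\bigl(4(1-a^2)\bigr)$. This admits a solution only for $a<1$, and the corresponding $t$ lies in $(0,1]$ (equivalently $s\in(0,2]$) exactly when $a\le 2\sqrt2/3$. In that range one substitutes back using $1+4s=1/(1-a^2)$: then $2t=\sqrt{1+4s}-1=(1-a^2)^{-1/2}-1$ and $2a\sqrt{s}=a^2(1-a^2)^{-1/2}$, whence $D=\sqrt{1-a^2}-(1-a^2)$. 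Comparing with the endpoint values $D(0^+)=a^2$ and $D(1)=(\sqrt2-a)^2$ shows this interior value is the smaller one for $0<a\le 2\sqrt2/3$, so $r_a^2=\sqrt{1-a^2}-(1-a^2)$ there. For $2\sqrt2/3\le a<\sqrt2$ there is no interior critical point in $(0,1]$, and one checks $D'\le 0$ on $(0,1]$ (which reduces exactly to $9a^2\ge 8$), so $D$ is decreasing and attains its minimum $D(1)=(\sqrt2-a)^2$; thus $r_a=\sqrt2-a$. The two formulas agree at $a=2\sqrt2/3$, where both equal $2/9$.

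The main obstacle is the case analysis together with verifying that the critical point gives a global minimum on the half-open interval: one must rule out its being a maximum, treat the degenerate algebra for $a\ge 1$ (where $D'$ never vanishes), and confirm the comparison $\sqrt{1-a^2}-(1-a^2)\le(\sqrt2-a)^2$ on the first subinterval. A brief clean-up completes the argument: in both cases $r_a<a$ — immediate from $\sqrt{1-a^2}<1$ in the first case and from $a\ge 2\sqrt2/3>1/\sqrt2$ in the second — so the disk $\{w:|w-a|<r_a\}$ does not reach the origin and hence cannot leave the right-hand loop of the lemniscate, giving the claimed inclusion.
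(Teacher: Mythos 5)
The paper does not prove this lemma; it is quoted verbatim from \cite{nav}, so there is no in-house argument to compare against. Your proof is correct and self-contained. Reducing the claim to computing the distance from the real point $a$ to the curve $|w^2-1|=1$ is legitimate because $a$ itself lies in the open right loop and a connected disk avoiding the boundary cannot leave that component; your parametrization $w^2=1+e^{i\phi}$, $t=\cos(\phi/2)$, correctly gives $|w-a|^2=2t-2a\sqrt{t+t^2}+a^2$ on the right loop (with the left loop and the node $w=0$ dominated), and the critical-point computation via $s=t+t^2=a^2/(4(1-a^2))$ does yield $D_{\min}=\sqrt{1-a^2}-(1-a^2)$ for $a\le 2\sqrt2/3$ and $D_{\min}=(\sqrt2-a)^2$ otherwise, matching $r_a^2$. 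Two of the verifications you flag as remaining obstacles are in fact immediate: the comparison $\sqrt{1-a^2}-(1-a^2)\le(\sqrt2-a)^2$ is automatic because $t=1$ lies in the feasible set, so the global minimum cannot exceed $D(1)$; and the critical point cannot be a maximum because $D'(t)\to-\infty$ as $t\to0^+$ while $D'(1)=2-3a/\sqrt2>0$ exactly when $a<2\sqrt2/3$, so $D$ decreases then increases through its unique critical point. The closing remark that $r_a<a$ (so the disk misses the origin) is redundant once the distance to the full boundary curve, including its node, has been bounded below by $r_a$, but it does no harm. This is essentially the same minimization carried out in \cite{nav}, so nothing further is needed.
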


\begin{lemma}\label{lm6}{\rm\cite{vravicmft}} Let $a> 1/2$. If the number $R_a$ satisfies
\[R_a=\begin{cases}
a- 1/2, &  1/2 < a  \leq  3/2  \\
\sqrt{2a-2}, & a \geq  3/2 ,
\end{cases} \]
then  \[ \{ w: |w-a|< R_a\} \subseteq \{w: |w-1|<\RE w\}. \]
\end{lemma}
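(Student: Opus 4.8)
The plan is to recognize the target region $\Omega := \{w : |w-1| < \RE w\}$ as the interior of a parabola and then reduce the inclusion to a distance computation. Writing $w = u+iv$, the inequality $|w-1| < \RE w$ is equivalent to $(u-1)^2 + v^2 < u^2$, i.e.\ to $2u > 1 + v^2$. Hence $\Omega$ is the open region lying to the right of the parabola $u = (1+v^2)/2$ (vertex at $1/2$, opening toward $+\infty$); it is convex, and since $a > 1/2$ the center $a$ lies in $\Omega$. Because $\Omega$ is open and convex (equivalently $\mathbb{C}\setminus\Omega$ is closed and convex, so the nearest-point projection of $a$ lands on $\partial\Omega$), the largest open disk about $a$ contained in $\Omega$ has radius exactly $\operatorname{dist}(a,\partial\Omega)$. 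Thus it suffices to show this distance equals $R_a$; in fact this simultaneously shows $R_a$ is best possible.

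Next I would parametrize the boundary by $\gamma(v) = \bigl((1+v^2)/2,\, v\bigr)$ and minimize $|\gamma(v) - a|^2 = \bigl((1+v^2)/2 - a\bigr)^2 + v^2$ over $v \in \mathbb{R}$. Substituting $t = v^2 \ge 0$ converts this into the quadratic $q(t) = (t/2 + 1/2 - a)^2 + t = t^2/4 + (3/2 - a)t + (a-1/2)^2$, whose unrestricted minimum occurs at $t^\ast = 2a-3$. If $a \ge 3/2$, then $t^\ast \ge 0$ is admissible and a short computation gives $q(t^\ast) = 2a-2$, so $\operatorname{dist}(a,\partial\Omega) = \sqrt{2a-2}$. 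If $1/2 < a < 3/2$, then $q$ is increasing on $[0,\infty)$, so the minimum is $q(0) = (a-1/2)^2$, giving $\operatorname{dist}(a,\partial\Omega) = a - 1/2$. These agree with the stated $R_a$ (and coincide at $a = 3/2$), so $\{w : |w-a| < R_a\} \subseteq \Omega$, which is the claim.

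One should also verify that minimizing over the boundary curve $\gamma$ genuinely captures the distance from $a$ to the whole complement $\mathbb{C}\setminus\Omega$; this is immediate from convexity of the complement, since the projection of $a$ onto a closed convex set not containing $a$ lies on its boundary. Apart from this observation and some routine bookkeeping in evaluating $q(t^\ast)$, I do not expect any real obstacle: the only mildly delicate point is keeping the two cases ($t^\ast \ge 0$ versus the constrained minimum at $t = 0$) straight, which is exactly what produces the two branches in the definition of $R_a$.
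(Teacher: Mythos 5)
This lemma is stated in the paper without proof --- it is quoted from the reference \cite{vravicmft} --- so there is no in-paper argument to compare against; your proposal supplies a correct, self-contained proof. The reduction is right: $|w-1|<\RE w$ with $w=u+iv$ is equivalent to $2u>1+v^2$, so $\Omega$ is the convex parabolic region to the right of $u=(1+v^2)/2$, and the largest disk centred at $a>1/2$ inside $\Omega$ has radius $\operatorname{dist}(a,\partial\Omega)$. Your minimization is also correct: with $t=v^2$ one gets $q(t)=t^2/4+(3/2-a)t+(a-1/2)^2$, vertex at $t^*=2a-3$, giving $q(t^*)=2a-2$ when $a\geq 3/2$ and $q(0)=(a-1/2)^2$ when $1/2<a<3/2$, matching $R_a$ in both branches (and agreeing at $a=3/2$). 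The one genuine error is the parenthetical claim that $\mathbb{C}\setminus\Omega$ is convex: the complement of a convex set is not convex, and indeed here the midpoint of two points far out on opposite arms of the parabola lies inside $\Omega$. Fortunately you do not need convexity of the complement at all. For any open set $\Omega$ with $a\in\Omega$ and $\Omega\neq\mathbb{C}$, the segment from $a$ to any point $x\in\mathbb{C}\setminus\Omega$ meets $\partial\Omega$, so $\operatorname{dist}(a,\mathbb{C}\setminus\Omega)=\operatorname{dist}(a,\partial\Omega)$, and the largest disk about $a$ contained in $\Omega$ has exactly this radius. (Convexity of $\Omega$ itself is likewise not needed for the inclusion; it only matters if one wants to phrase things via supporting lines.) With that justification substituted, the argument is complete and also establishes that $R_a$ is the largest admissible radius.
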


\section{Radius Constants for Analytic Functions}
Let $\mathcal{F}_1$ be the class of functions $f\in \mathcal{A}$ satisfying the inequality
\[ \RE\left(\frac{f(z)}{g(z)}\right)>0\quad (z\in \mathbb{D})\]
for some $g\in \mathcal{A}$ with \[ \RE \left(\frac{g(z)}{z}\right)>0\quad (z\in \mathbb{D}).\] Ratti \cite{ratti68} showed that the radius of starlikeness of functions in $\mathcal{F}_1$  is $\sqrt{5}-2\approx 0.2360$ and that the radius can be improved to 1/3 if the function $g$ additionally satisfies $\RE(g(z)/z)>1/2$.

\begin{theorem}\label{th7} For the class $\mathcal{F}_1$,   the following sharp radius results hold:
 \begin{enumerate}
 \item[(a)]\label{i3} the $ \s $-radius for $\mathcal{F}_1$ is \[R_\s= \frac{\sqrt2-1}{2+\sqrt{7-2\sqrt2}}\simeq0.10247,\]
 \item[(b)]\label{i4}the $ \m $-radius for $\mathcal{F}_1$ is \[R_{\m}= \frac{\beta-1}{2+\sqrt{4+(\beta-1)^2}},\]
 \item[(c)]\label{i27}  the $\T $-radius for $\mathcal{F}_1$ is \[R_{\T}= \frac{ 1-\alpha }{2+\sqrt{5+\alpha ^2-2\alpha }},\]
 \item[(d)]\label{i5}   the $\p $-radius for $\mathcal{F}_1$ is \[R_{\p}=R_{\mathcal{S}^*{( 1/2 )}}=\frac{ 1}{4+\sqrt{17}}\simeq0.12311.\]
 \end{enumerate}
\end{theorem}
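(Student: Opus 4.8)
The plan is to reduce all four parts to a single disk estimate for the logarithmic derivative $zf'(z)/f(z)$, and then read off each radius from the inclusions collected in Section~\ref{sec12}. Given $f\in\mathcal{F}_1$ together with a corresponding $g$, put $p_1(z)=f(z)/g(z)$ and $p_2(z)=g(z)/z$. Since $\RE(g(z)/z)>0$, the function $g$ is zero-free on $\mathbb{D}\setminus\{0\}$ with a simple zero at the origin, so both quotients are analytic in $\mathbb{D}$ with $p_1(0)=p_2(0)=1$, and the two defining inequalities of $\mathcal{F}_1$ say precisely that $p_1,p_2\in\mathcal{P}$. Hence $f(z)=z\,p_1(z)\,p_2(z)$, and logarithmic differentiation gives
\[
 \frac{zf'(z)}{f(z)}=1+\frac{zp_1'(z)}{p_1(z)}+\frac{zp_2'(z)}{p_2(z)}.
\]
Applying Lemma~\ref{lm3} with $\alpha=0$ to each quotient on the right yields $|zp_i'(z)/p_i(z)|\le 2r/(1-r^2)$ for $|z|=r<1$, and therefore
\[
 \left|\frac{zf'(z)}{f(z)}-1\right|\le\frac{4r}{1-r^2}\qquad(|z|=r<1);
\]
that is, $zf'(z)/f(z)$ lies in the disk centered at $1$ of radius $4r/(1-r^2)$.

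Each part of the theorem is then the assertion that this disk sits inside the appropriate target region exactly up to the stated radius. For (a), Lemma~\ref{lm4} with $a=1$ (so $r_1=\sqrt2-1$) shows $zf'(z)/f(z)\in\{w:|w^2-1|<1\}$ once $4r/(1-r^2)\le\sqrt2-1$. For (b), the disk lies in $\{w:\RE w<\beta\}$ exactly when its right-hand endpoint obeys $1+4r/(1-r^2)\le\beta$. For (c), it lies in $\{w:\RE w>\alpha\}$ exactly when $1-4r/(1-r^2)\ge\alpha$. For (d), Lemma~\ref{lm6} with $a=1$ (so $R_1=1/2$) gives containment in the parabolic region $\{w:|w-1|<\RE w\}$ precisely when $4r/(1-r^2)\le 1/2$, which is also the inequality governing the $\mathcal{S}^*(1/2)$-radius. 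In each case the admissible $r$ are controlled by a quadratic $c\,r^2+4r-c=0$ with $c\in\{\sqrt2-1,\ \beta-1,\ 1-\alpha,\ 1/2\}$, whose relevant root, after multiplying through by $2+\sqrt{4+c^2}$, simplifies to $c/(2+\sqrt{4+c^2})$; the four choices of $c$ reproduce the displayed constants. In (d) one also notes that $\p\subset\mathcal{S}^*(1/2)$ --- any $w$ with $|w-1|<\RE w$ has $\RE w>1/2$ --- so $R_{\p}\le R_{\mathcal{S}^*(1/2)}$; together with the lower bound $R_{\p}\ge 1/(4+\sqrt{17})$ from Lemma~\ref{lm6} and the sharpness below, the two radii coincide.

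Sharpness in all four parts is witnessed by the single extremal pair $g_0(z)=z(1+z)/(1-z)$ and $f_0(z)=z(1+z)^2/(1-z)^2$: here $f_0(z)/g_0(z)=g_0(z)/z=(1+z)/(1-z)$ has positive real part, so $f_0\in\mathcal{F}_1$, while $zf_0'(z)/f_0(z)=1+4z/(1-z^2)$. Evaluating at $z=\pm r$ shows that the bound $4r/(1-r^2)$ is attained, so the disk above really is the range of the logarithmic derivative over $|z|=r$; consequently at $r=R_{\mathcal{G}}$ the value $zf_0'(z)/f_0(z)$ meets the boundary of the target region at its critical point --- the tip $w=\sqrt2$ of the lemniscate for (a), the line $\RE w=\beta$ for (b), the line $\RE w=\alpha$ for (c), the parabola's vertex $w=1/2$ for (d) --- and is pushed strictly outside once $r>R_{\mathcal{G}}$, so no larger radius is admissible.

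I do not anticipate a genuine obstacle: the proof is mostly bookkeeping built on the lemmas of Section~\ref{sec12}. The steps that need care are the algebraic reduction of the four quadratics to the advertised closed forms, the observation that Lemma~\ref{lm3} is \emph{exactly} saturated by $f_0$ (so that the radii are sharp and not merely lower bounds), and, in part~(d), upgrading $R_{\p}\le R_{\mathcal{S}^*(1/2)}$ to equality via the chain of inequalities just described.
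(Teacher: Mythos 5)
Your proposal is correct and follows essentially the same route as the paper: the same factorization $f(z)=z\,(g(z)/z)\,(f(z)/g(z))$ into two members of $\mathcal{P}$, the same application of Lemma~\ref{lm3} to obtain the disk bound $|zf'(z)/f(z)-1|\le 4r/(1-r^2)$, the same use of Lemmas~\ref{lm4} and~\ref{lm6} with $a=1$, and the same extremal pair $f_0(z)=z((1+z)/(1-z))^2$, $g_0(z)=z(1+z)/(1-z)$. The unified treatment of the four quadratics via $cr^2+4r-c=0$ and the explicit remark that $\p\subset\mathcal{S}^*(1/2)$ are minor presentational improvements, not a different argument.
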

\begin{proof} (a) Let $f\in \mathcal{F}_1$ and
define $p,h:\mathbb{D}\rightarrow \mathbb{C}$ by
\[p(z)=\frac{g(z)}{z}\quad \text{ and } \quad h(z)=\frac{f(z)}{g(z)}.\]
 Then $p$, $h \in\mathcal{P} $ and
 $f(z)=g(z)h(z)=zp(z)h(z)$. Thus
 \begin{equation*}\label{e1}
 \frac{zf'(z)}{f(z)}=1+\frac{zp'(z)}{p(z)}+\frac{zh'(z)}{h(z)}.
\end{equation*}
Using  Lemma \ref{lm3}, it follows that
\begin{equation}\label{ee1}
\left|\frac{zf'(z)}{f(z)}-1\right|\leq\frac{4r}{1-r^2}, \quad (|z|=r).
\end{equation}

By Lemma \ref{lm4}, the function $f$ satisfies
 \[ \left|\left(\frac{zf'(z)}{f(z)}\right)^2-1\right|\leq 1  \] provided
\begin{equation*}\label{e2}
\frac{4r}{1-r^2}\leq\sqrt2-1,
\end{equation*}
or
\[(\sqrt2-1)r^2+4r+1-\sqrt2 \leq0.\]
This inequality yields  $r \leq R_\s. $

To show that $R_\s$ is the sharp $\s$-radius for $\mathcal{F}_1$, consider the functions $f_0$ and $g_0$ defined by
\begin{equation}\label{f0g0}
f_0(z)=z\left(\frac{1+z}{1-z}\right)^2 \quad \text{and} \quad g_0(z)=z \left(\frac{1+z}{1-z}\right).
\end{equation}
Since $\RE\left( f_0(z)/g_0(z)\right)=\RE((1+z)/(1-z))>0$  and $  \RE \left(g_0(z)/z\right)>0$,  the function
$f_0 $ belongs to $\mathcal{F}_1$. Now
\[ \frac{zf_0'(z)}{f_0(z)}= 1+\frac{4z}{1-z^2}.\]   For $z=\rho:=R_\s$,
\[ \left|\left(\frac{zf'_0(z)}{f_0(z)}\right)^2-1\right|=\left|\left(1+\frac{4\rho}{1-\rho^2}\right)^2-1\right|=1  .\]
This shows that  the radius in (a)  is sharp.

(b) From inequality \eqref{ee1}, it follows that
\[\RE\frac{zf'(z)}{f(z)}\leq1+\frac{4r}{1-r^2}\leq \beta\]
if\[(1-\beta)+4r-(1-\beta)r^2\leq 0,\]
that is, for $r\leq R_{\m}.$ For the function $f_0$ given by \eqref{f0g0},
\[ \frac{zf_0'(z)}{f_0(z)}= \frac{4\rho+1-\rho^2}{1-\rho^2}=\beta\quad ( z=\rho:=R_{\m} ),\]
 and so the radius is sharp.

(c) Inequality \eqref{ee1} also yields
\[\RE\frac{zf'(z)}{f(z)}\geq1-\frac{4r}{1-r^2}\geq\alpha\]    provided
\[r^2(1-\alpha)+4r-(1-\alpha)\leq0.\]
The last inequality holds whenever  $r\leq R_{\T}$.
The function $f_0$ in \eqref{f0g0} gives
\[ \frac{zf'_0(z)}{f_0(z)}= \frac{ 1+4z-z^2}{1-z^2}=\alpha\]
for $z=-\rho:=-R_{\T}$, and this shows that the radius in (c) is sharp.

(d) In view of Lemma \ref{lm6}, the  circular disk \eqref{ee1} lies completely  inside the parabolic region $\{w:|w-1|<\RE w\}$ provided
\[ \frac{4r}{1-r^2}\leq \frac{1}{2},\]
or
 \begin{equation*}
 r^2+8r-1\leq0.
 \end{equation*}
The last inequality holds whenever  $r\leq R_{\p}=R_{\mathcal{S}^*{( 1/2 )}}= 1/(4+\sqrt{17})$  for $z=-R_{\p}$.

The function $f_0$ in \eqref{f0g0} satisfies
\[\left|\frac{zf_0'(z)}{f_0(z)}-1\right|=\left|\frac{4z}{1-z^2}\right|=
\RE\frac{1-z^2+4z}{1-z^2}=\RE\left(\frac{zf_0'(z)}{f_0(z)}\right),\]
and so the result in (d) is sharp.
\end{proof}

Consider next the class $\mathcal{F}_2$  of functions $f\in \mathcal{A}$ satisfying
\[  \RE\left(\frac{f(z)}{g(z)}\right)>0\quad ( z\in \mathbb{D})\]
for some function $g\in \mathcal{A}$ with
 \[  \RE \left(\frac{g(z)}{z}\right)>\frac{1}{2}\quad ( z\in \mathbb{D}). \]

\begin{theorem}\label{th5}
For the class $\mathcal{F}_2$,    the following radius results hold:
 \begin{enumerate}
  \item[(a)]\label{i1} the $ \s $-radius  is  \[R_\s= \frac{4-2\sqrt2}{ \sqrt2(\sqrt{17-4\sqrt2}+3)}\simeq0.13009,\]

  \item[(b)]\label{i2} the $ \m $-radius  is   \[R_{\m}= \frac{2(\beta-1)}{3+\sqrt{9+4\beta(\beta-1)}},\]

  \item[(c)]\label{ii1}the $ \T $-radius  is \[R_{\T}= \frac{2(1-\alpha )}{3+\sqrt{9-4\alpha +4\alpha ^2}},\]

 \item[(d)]\label{ii5} the $\p $-radius satisfies  \[R_{\p}\geq -3+\sqrt{10}\simeq0.162278.\]
 \end{enumerate}
The radius in (a), (b), and (c) are sharp.
\end{theorem}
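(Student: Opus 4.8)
The plan is to follow the proof of Theorem~\ref{th7}, adapted to the stronger hypothesis $\RE(g(z)/z)>1/2$. For $f\in\mathcal{F}_2$ set $p(z)=g(z)/z$ and $h(z)=f(z)/g(z)$; then $p\in\mathcal{P}(1/2)$, $h\in\mathcal{P}$, $f(z)=zp(z)h(z)$, and
\[
\frac{zf'(z)}{f(z)}=1+\frac{zp'(z)}{p(z)}+\frac{zh'(z)}{h(z)}.
\]
I would extract two estimates for $|z|=r$. First, applying Lemma~\ref{lm3} with $\alpha=1/2$ to $p$ and with $\alpha=0$ to $h$ gives $|zp'(z)/p(z)|\le r/(1-r)$ and $|zh'(z)/h(z)|\le 2r/(1-r^2)$, hence the disk bound
\[
\left|\frac{zf'(z)}{f(z)}-1\right|\le\frac{r}{1-r}+\frac{2r}{1-r^2}=\frac{r^2+3r}{1-r^2}.
\]
Second, since $p$ only satisfies the one-sided condition $\RE p(z)>1/2$, I would use the sharper one-sided estimate of Lemma~\ref{lm7}, namely $\RE(zp'(z)/p(z))\ge -r/(1+r)$ for $r<1/3$, together with $\RE(zh'(z)/h(z))\ge -2r/(1-r^2)$, to obtain
\[
\RE\frac{zf'(z)}{f(z)}\ge 1-\frac{r}{1+r}-\frac{2r}{1-r^2}=\frac{1-3r}{1-r^2}.
\]

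The four conclusions then follow by feeding the appropriate region into the containment lemmas of Section~\ref{sec12}. For (a) I would invoke Lemma~\ref{lm4} with $a=1$ (so $r_a=\sqrt2-1$, since $1\ge 2\sqrt2/3$): the disk lies in $\{w:|w^2-1|<1\}$ exactly when $(r^2+3r)/(1-r^2)\le\sqrt2-1$, i.e.\ $\sqrt2\,r^2+3r-(\sqrt2-1)\le 0$, giving $r\le R_\s$. For (b), $1+(r^2+3r)/(1-r^2)\le\beta$ rearranges to $\beta r^2+3r-(\beta-1)\le 0$, i.e.\ $r\le R_\m$. For (c), $(1-3r)/(1-r^2)\ge\alpha$ rearranges to $\alpha r^2-3r+(1-\alpha)\ge 0$, i.e.\ $r\le R_\T$; one must then check that $R_\T\le 1/3$, with equality only at $\alpha=0$ (which recovers Ratti's radius of starlikeness $1/3$ for $\mathcal{F}_2$), so that the restriction $r<1/3$ in Lemma~\ref{lm7} is not binding. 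For (d), combining the two displays, $w=zf'(z)/f(z)$ satisfies $|w-1|<\RE w$ as soon as $(r^2+3r)/(1-r^2)\le(1-3r)/(1-r^2)$, i.e.\ $r^2+6r-1\le 0$, whence $R_\p\ge -3+\sqrt{10}$; this being only a sufficient condition (the actual image region is strictly smaller) is why (d) is not claimed sharp.

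Sharpness of (a)--(c) would be established with the single pair
\[
g_0(z)=\frac{z}{1-z},\qquad f_0(z)=\frac{z(1+z)}{(1-z)^2},
\]
for which $\RE(g_0(z)/z)=\RE(1/(1-z))>1/2$ and $\RE(f_0(z)/g_0(z))=\RE((1+z)/(1-z))>0$, so $f_0\in\mathcal{F}_2$, with
\[
\frac{zf_0'(z)}{f_0(z)}=1+\frac{3z+z^2}{1-z^2}.
\]
This equals $\sqrt2$ at $z=R_\s$ (a boundary point of $|w^2-1|=1$), equals $\beta$ at $z=R_\m$, and equals $\alpha$ at $z=-R_\T$. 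Since $g_0(z)/z=1/(1-z)$ is precisely the extremal in Lemma~\ref{lm7} and $f_0(z)/g_0(z)=(1+z)/(1-z)$ is extremal in Lemma~\ref{lm3}, both basic estimates are attained simultaneously along the relevant radius, which is what forces each constant to be best possible.

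I expect the one genuinely delicate point to be the choice of one-sided estimate in (c) and (d): the cruder bound $\RE(zf'(z)/f(z))\ge 1-(r^2+3r)/(1-r^2)$ read off from the disk would yield a strictly smaller, non-sharp $\T$-radius, so the order-$1/2$ estimate of Lemma~\ref{lm7} is essential, and its range restriction $r<1/3$ must be tracked and shown to cost nothing. The remaining work — the algebraic reductions to the closed forms $R_\s$, $R_\m$, $R_\T$, and the verification that the extremal pair saturates each inequality at the stated radius — is routine.
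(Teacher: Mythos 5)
Your proposal is correct and follows essentially the same route as the paper: the same decomposition $f=zph$ with $p\in\mathcal{P}(1/2)$, $h\in\mathcal{P}$, the same bounds from Lemmas \ref{lm3} and \ref{lm7} giving $|zf'/f-1|\le(3r+r^2)/(1-r^2)$ and $\RE(zf'/f)\ge(1-3r)/(1-r^2)$, the same quadratics, and the same extremal pair $f_0(z)=z(1+z)/(1-z)^2$, $g_0(z)=z/(1-z)$. Your explicit check that $R_{\T}\le 1/3$ (so the range restriction in Lemma \ref{lm7} is harmless) is a small point the paper leaves implicit, but otherwise the arguments coincide.
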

\begin{proof}
(a) Let $f\in \mathcal{F}_2$, and
define functions $p,h:\mathbb{D}\rightarrow \mathbb{C}$ by
\[p(z)=\frac{g(z)}{z}\quad \text{ and } \quad h(z)=\frac{f(z)}{g(z)}.\]
Then $f(z)=z h(z)p(z)$ with $h \in \mathcal{P}$ and $p\in \mathcal{P}(1/2).$ Now
\begin{equation}\label{eq10}
\frac{zf'(z)}{f(z)}=1+\frac{zh'(z)}{h(z)}+\frac{zp'(z)}{p(z)}.
\end{equation}
From  Lemma \ref{lm3}, it follows that
\begin{equation}\label{eq10.1}
\left|\frac{zf'(z)}{f(z)}-1\right|\leq\frac{2r}{1-r^2}+\frac{r}{1-r}=\frac{3r+r^2}{1-r^2}.
\end{equation}
 By Lemma \ref{lm4}, the function $f$ satisfies
 \[ \left|\left(\frac{zf'(z)}{f(z)}\right)^2-1\right|\leq 1  \] provided
\[\frac{3r+r^2}{1-r^2}\leq\sqrt2-1,\] or  \[ \sqrt2r^2+3r+1-\sqrt2\leq0.\] This holds whenever   $r \leq R_\s. $

This radius $R_\s$ is the sharp $\s$-radius for $\mathcal{F}_2$. For this purpose, let $f_0$ and $g_0$ be defined by
\begin{equation}\label{2f0g0}
f_0(z)=\frac{z(1+z)}{(1-z)^2}\quad \text{and} \quad g_0(z)=  \frac{z}{1-z} .
\end{equation}
Since $\RE\left( f_0(z)/g_0(z)\right)>0$  and $  \RE \left(g_0(z)/z\right)>1/2$,  the function
$f_0\in \mathcal{F}_2$. Also
\[ \frac{zf_0'(z)}{f_0(z)}= 1+\frac{z(3+z)}{1-z^2}.\]   Thus at $z=\rho:=R_\s$,

\[ \left|\left(\frac{zf'_0(z)}{f_0(z)}\right)^2-1\right|=\left|\left(\frac{1+3z}{1-z^2}\right)^2-1\right|=1.\]

(b) From  inequality \eqref{eq10.1}, it follows that
 \[\RE\frac{zf'(z)}{f(z)}\leq\frac{3r+1}{1-r^2}\leq\beta\]
provided
 \[(1+\beta)r^2+3r+1-\beta\leq0,\] that is, if $r\leq\m.$
For  $f_0$ given by \eqref{2f0g0},
\[ \frac{zf_0'(z)}{f_0(z)}= \frac{1+3\rho}{1-\rho^2}=\beta\quad ( z=\rho:=R_{\m} ),\]
 and so the result in (b) is sharp.


(c) Using Lemmas \ref{lm3},  \ref{lm7} and \eqref{eq10}, it follows that
\begin{equation}\label{eq01}
\RE\left(\frac{zf'(z)}{f(z)}\right)>1-\frac{2r}{1-r^2}-\frac{r}{1+r}=\frac{1-3r}{1-r^2}\geq\alpha
\end{equation}
if
 \[  \alpha-1+3r-\alpha r^2  \leq0.\] The last inequality holds whenever   $r\leq R_{\T} $.
For  $f_0$ given by \eqref{2f0g0},
\[ \frac{zf'_0(z)}{f_0(z)}= \frac{ 1+3z}{1-z^2}=\frac{1-3\rho}{1-\rho^2}=\alpha\]
at $z=-\rho:=-R_{\T}$, and this shows that the result in (c) is sharp.

(d) From \eqref{eq10.1} and \eqref{eq01},  it follows that \[\left|\frac{zf'(z)}{f(z)}-1\right|<\RE\left(\frac{zf'(z)}{f(z)}\right) \] if
\[ \frac{1-3r}{1-r^2}\geq\frac{ 3r+r^2}{1-r^2}, \]
that is \[r^2+6r-1\leq0.\]
 The last inequality holds whenever $r\leq R_{\p}.$
\end{proof}

\begin{conjecture}
The sharp $\p$-radius for $\mathcal{F}_2$ is
\[0.162278\simeq-3+\sqrt{10}\leq R_{\mathcal{S}^*{( 1/2 )}}= R_{\p}=3-2\sqrt2\simeq0.171573.\]
\end{conjecture}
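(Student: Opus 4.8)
The plan is to prove that $R_{\p}(\mathcal{F}_2)=3-2\sqrt2$ exactly. Since $\{w:|w-1|<\RE w\}=\{u+iv:u>\tfrac12(1+v^2)\}$ lies inside the half-plane $\{\RE w>1/2\}$, we have $\p\subseteq\mathcal{S}^*(1/2)$, so $R_{\p}(\mathcal{F}_2)\le R_{\mathcal{S}^*(1/2)}(\mathcal{F}_2)=\tfrac1{3+2\sqrt2}=3-2\sqrt2$ by Theorem \ref{th5}(c) with $\alpha=1/2$; it remains to prove the matching lower bound $R_{\p}(\mathcal{F}_2)\ge 3-2\sqrt2$. Sharpness is then witnessed by the function $f_0$ of \eqref{2f0g0}, for which $zf_0'(z)/f_0(z)=(1+3z)/(1-z^2)$ equals $1/2$ --- the vertex of the parabola --- at $z=-(3-2\sqrt2)$.

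For the lower bound I would discard the two separate estimates used in Theorem \ref{th5}(d) --- which lose sharpness because the extremal cases for $|zf'/f-1|$ and for $\RE(zf'/f)$ are different --- in favour of a single disk estimate \emph{centered at $1/(1-r^2)$}:
\[ \left|\frac{zf'(z)}{f(z)}-\frac1{1-r^2}\right|\le\frac{3r}{1-r^2}\qquad(|z|=r<\tfrac13).\]
Given this, Lemma \ref{lm6} with $a=1/(1-r^2)\in(1/2,3/2]$ says that the disk $\{|w-a|<a-\tfrac12\}=\{|w-a|<\tfrac{1+r^2}{2(1-r^2)}\}$ lies in the parabola, so the above disk lies in it when $\tfrac{3r}{1-r^2}\le\tfrac{1+r^2}{2(1-r^2)}$, i.e.\ $r^2-6r+1\ge 0$, i.e.\ $r\le 3-2\sqrt2$ (with strict inclusion for $r<3-2\sqrt2$). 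To obtain the disk estimate, write $f=zph$ with $p=g/z\in\mathcal{P}(1/2)$ and $h=f/g\in\mathcal{P}$, so that $zf'/f=1+zp'/p+zh'/h$ and, since $1-1/(1-r^2)=-r^2/(1-r^2)$, the triangle inequality reduces it to $|zh'(z)/h(z)|\le 2r/(1-r^2)$ (Lemma \ref{lm3} with $\alpha=0$) together with the refined bound
\[ \left|\frac{zp'(z)}{p(z)}-\frac{r^2}{1-r^2}\right|\le\frac{r}{1-r^2}\qquad(p\in\mathcal{P}(1/2),\ |z|=r<\tfrac13),\]
which sharpens Lemma \ref{lm3} for $\alpha=1/2$ (that lemma gives only the modulus bound $r/(1-r)$, the largest modulus on this disk).

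The heart of the matter --- and the step I expect to be the real obstacle --- is this refined $\mathcal{P}(1/2)$ estimate. I would write $p=1/(1-\omega)$ for a Schwarz function $\omega$ (equivalent to $\RE p>1/2$), so $zp'/p=z\omega'/(1-\omega)$; then for fixed $z$ with $|z|=r$ and fixed value $a=\omega(z)$, a two-point Pick interpolation (or the Schwarz lemma applied to $\omega(z)/z$) shows that $z\omega'(z)$ ranges over the closed disk $\{|\zeta-a|\le(r^2-|a|^2)/(1-r^2)\}$, hence $zp'(z)/p(z)$ ranges over the disk with centre $a/(1-a)$ and radius $(r^2-|a|^2)/\bigl((1-r^2)|1-a|\bigr)$. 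As $a$ runs over $|a|\le r$ the centres $a/(1-a)$ fill exactly the target disk $\{|w-r^2/(1-r^2)|\le r/(1-r^2)\}$ (a M\"obius image), so one needs only to check that each of these smaller disks stays inside the target disk; writing $a=|a|e^{i\phi}$, this reduces after squaring to the inequality $|a-r^2|+r^2-|a|^2\le r|1-a|$ for all $|a|\le r$, which holds exactly when $r\le 1/3$ --- whence the hypothesis $r<1/3$, comfortably met by $r\le 3-2\sqrt2$. Equality occurs when $|a|=r$, i.e.\ for $p(z)=1/(1-\lambda z)$, and this propagates to the extremal $f_0$. Combining the two bounds gives $R_{\p}(\mathcal{F}_2)=3-2\sqrt2$, superseding the estimate $-3+\sqrt{10}$.
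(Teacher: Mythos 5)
The paper offers no proof of this statement --- it is left as a conjecture, with only the weaker bound $R_{\p}\ge\sqrt{10}-3$ established in Theorem \ref{th5}(d) --- so there is nothing to compare against; your proposal, if correct, settles the conjecture. Having checked it, I believe it is correct. The upper bound is immediate as you say: the parabolic region lies in $\{\RE w>1/2\}$, so $R_{\p}\le R_{\mathcal{S}^*(1/2)}=1/(3+2\sqrt2)=3-2\sqrt2$ by the sharp part (c) of Theorem \ref{th5}. The real content is your disk estimate $|zf'/f-1/(1-r^2)|\le 3r/(1-r^2)$, which removes exactly the slack in the paper's argument (the paper bounds $|zf'/f-1|$ and $\RE(zf'/f)$ separately about the ``wrong'' centre $1$). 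Your refined $\mathcal{P}(1/2)$ estimate is the crux, and the one place where you assert rather than prove: the containment condition does reduce, after clearing $|1-a|(1-r^2)$, to $|a-r^2|+r^2-|a|^2\le r|1-a|$ for $|a|\le r$, and this is indeed valid precisely for $r\le1/3$, since (both sides being nonnegative) the difference of squares factors as
\[
\bigl(r|1-a|-(r^2-|a|^2)\bigr)^2-|a-r^2|^2=(r^2-|a|^2)\bigl(1-|a|^2-2r|1-a|\bigr),
\]
and $1-|a|^2-2r|1-a|\ge(1+|a|)(1-|a|-2r)\ge(1+|a|)(1-3r)\ge0$ when $|a|\le r\le1/3$. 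The Schwarz--Pick step ($z\omega'(z)$ confined to the disk $|\zeta-a|\le(r^2-|a|^2)/(1-r^2)$, via $\omega(z)/z$) is standard and correct, the hypothesis $r<1/3$ is comfortably satisfied since $3-2\sqrt2<1/3$, and equality throughout is attained by the paper's own extremal pair \eqref{2f0g0}, for which $zf_0'/f_0$ hits the parabola's vertex $1/2$ at $z=-(3-2\sqrt2)$. Note also that your disk estimate reproduces both \eqref{eq10.1} and \eqref{eq01} as corollaries (its real part ranges over $[(1-3r)/(1-r^2),(1+3r)/(1-r^2)]$), which is a good consistency check. In short: a correct and genuinely new argument that upgrades the paper's conjecture to a theorem; I would only ask you to write out the factorization above explicitly.
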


Let $\mathcal{F}_3$ be the class of all functions $f\in \mathcal{A}$ satisfying the inequality
\[  \left|\frac{f(z)}{g(z)}-1 \right|<1 \quad ( z\in \mathbb{D})\]
for some function $g\in \mathcal{A}$ with
\[ \RE \left(\frac{g(z)}{z}\right)>0 \quad ( z\in \mathbb{D}).\]

\begin{theorem}\label{th5b}
For the class  $\mathcal{F}_3$,    the following radius results hold:
 \begin{enumerate}
  \item[(a)]\label{i1} the $ \s $-radius is  \[R_\s= \frac{4-2\sqrt2}{ \sqrt2(\sqrt{17-4\sqrt2}+3)}\simeq0.13009,\]

  \item[(b)]\label{i2} the $ \m $-radius  is   \[R_{\m}= \frac{2(\beta-1)}{3+\sqrt{9+4\beta(\beta-1)}},\]

  \item[(c)]\label{ii2}the $ \T $-radius  is \[R_{\T}=\frac{2(1-\alpha )}{3+\sqrt{9+4(2-\alpha )(1-\alpha )}},\]

  \item[(d)]\label{i6} the $\p $-radius  is  \[R_{\p}=
  R_{\mathcal{S}^*{( 1/2)}}=\frac{2\sqrt{3}-3}{3}\simeq0.154701.\]
 \end{enumerate}
The radii in (c) and (d) are sharp.
\end{theorem}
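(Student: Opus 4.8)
The plan is to collapse all four parts to a single modulus estimate for $zf'(z)/f(z)$ obtained from Lemma~\ref{lm3}, exactly as in the proof of Theorem~\ref{th5}, and then to read off each radius by intersecting the resulting disk with the appropriate region. I would begin by setting $p(z)=g(z)/z$ and $h(z)=f(z)/g(z)$; since $f,g\in\mathcal{A}$, both are analytic on $\mathbb{D}$ with $p(0)=h(0)=1$. The hypothesis $\RE(g(z)/z)>0$ says $p\in\mathcal{P}$, while a routine Möbius computation (namely $|w-1|<1\iff|w|^2<2\RE w\iff\RE(1/w)>1/2$, using $1/w=\bar w/|w|^2$ and $w\neq0$) turns $|f(z)/g(z)-1|<1$ into $1/h\in\mathcal{P}(1/2)$. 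Writing $f(z)=zp(z)h(z)$ and taking the logarithmic derivative gives
\[
\frac{zf'(z)}{f(z)}=1+\frac{zp'(z)}{p(z)}-\frac{z(1/h)'(z)}{(1/h)(z)},
\]
so applying Lemma~\ref{lm3} with $\alpha=0$ to $p$ and with $\alpha=1/2$ to $1/h$ yields, for $|z|=r$,
\[
\left|\frac{zf'(z)}{f(z)}-1\right|\le\frac{2r}{1-r^2}+\frac{r}{1-r}=\frac{3r+r^2}{1-r^2},
\]
which is exactly inequality~\eqref{eq10.1}. The point worth flagging is that although this estimate is literally the one from Theorem~\ref{th5}, the roles of $\mathcal{P}$ and $\mathcal{P}(1/2)$ have been interchanged — there $h\in\mathcal{P}$ and $p\in\mathcal{P}(1/2)$, here $p\in\mathcal{P}$ and $1/h\in\mathcal{P}(1/2)$ — so the refinement of Theorem~\ref{th5}(c) via Lemma~\ref{lm7} is no longer available: Lemma~\ref{lm7} gives a \emph{lower} bound on $\RE(zq'/q)$ for $q\in\mathcal{P}(1/2)$, whereas here one needs an \emph{upper} bound on $\RE(z(1/h)'/(1/h))$, so only the crude estimate $\RE(\cdot)\le|\cdot|\le r/(1-r)$ is at hand. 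This is precisely why the $\T$-radius in part~(c) differs from its Theorem~\ref{th5} analogue.

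Granting the displayed estimate, parts (a) and (b) go through verbatim as in Theorem~\ref{th5}(a),(b): for (a) invoke Lemma~\ref{lm4} with $a=1$ (so $r_1=\sqrt2-1$) and solve $(3r+r^2)/(1-r^2)\le\sqrt2-1$, i.e.\ $\sqrt2\,r^2+3r+1-\sqrt2\le0$; for (b) take real parts to get $\RE(zf'/f)\le(1+3r)/(1-r^2)\le\beta$ and solve the resulting quadratic. For (c), taking real parts the other way gives $\RE(zf'(z)/f(z))\ge1-(3r+r^2)/(1-r^2)=(1-3r-2r^2)/(1-r^2)\ge\alpha$, which rearranges to $(2-\alpha)r^2+3r-(1-\alpha)\le0$; this quadratic is negative at $r=0$ and opens upward, so it holds exactly for $r\le R_{\T}$ after rationalizing the positive root. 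For (d), Lemma~\ref{lm6} with $a=1$ (so $R_1=1/2$) shows the disk $|w-1|\le(3r+r^2)/(1-r^2)$ lies inside the parabolic region $\{w:|w-1|<\RE w\}$ as soon as $(3r+r^2)/(1-r^2)\le1/2$, i.e.\ $3r^2+6r-1\le0$, i.e.\ $r\le(2\sqrt3-3)/3$; and the same inequality forces $\RE(zf'/f)\ge1-(3r+r^2)/(1-r^2)\ge1/2$, placing $f$ in $\mathcal{S}^*(1/2)$. Since $\p\subset\mathcal{S}^*(1/2)$, the two radii coincide once sharpness is in hand.

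For the sharpness of (c) and (d) the extremal must be chosen so that both estimates from Lemma~\ref{lm3} are attained at a single point with the correct signs; the extremal of Theorem~\ref{th5} fails to lie in $\mathcal{F}_3$, so a new one is needed. I would take
\[
g_0(z)=\frac{z(1+z)}{1-z},\qquad f_0(z)=\frac{z(1+z)^2}{1-z},
\]
so that $f_0(z)/g_0(z)=1+z$ gives $|f_0(z)/g_0(z)-1|=|z|<1$, and $\RE(g_0(z)/z)=\RE((1+z)/(1-z))>0$, hence $f_0\in\mathcal{F}_3$. A direct computation gives $zf_0'(z)/f_0(z)=1+z(3-z)/(1-z^2)=(1+3z-2z^2)/(1-z^2)$, which at $z=-r$ equals $(1-3r-2r^2)/(1-r^2)$. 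At $z=-R_{\T}$ this equals $\alpha$, proving (c) is sharp; at $z=-R_{\p}$ one has $|zf_0'(z)/f_0(z)-1|=(3r+r^2)/(1-r^2)$ equal to $\RE(zf_0'(z)/f_0(z))=(1-3r-2r^2)/(1-r^2)$ (because $3r^2+6r-1=0$), both equal to $1/2$, so $f_0$ is neither parabolic starlike nor in $\mathcal{S}^*(1/2)$ on any slightly larger disk; this proves (d) is sharp and pins down $R_{\p}=R_{\mathcal{S}^*{( 1/2)}}=(2\sqrt3-3)/3$.

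The only genuine obstacle I anticipate is the sharpness bookkeeping in the last step: confirming $f_0\in\mathcal{F}_3$ and that the two Lemma~\ref{lm3} estimates are simultaneously tight, with matching signs, at the single point $z=-r$. Everything else is quadratic algebra entirely parallel to Theorems~\ref{th7} and~\ref{th5}.
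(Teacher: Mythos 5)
Your proposal is correct and follows essentially the same route as the paper: the same decomposition $f=z\,p\,h$ with $p=g/z\in\mathcal{P}$ and $g/f\in\mathcal{P}(1/2)$ (you phrase it via $1/h$, the paper via $h=g/f$ directly), the same bound $\left|zf'/f-1\right|\le(3r+r^2)/(1-r^2)$ from Lemma~\ref{lm3}, the same applications of Lemmas~\ref{lm4} and~\ref{lm6}, and the same extremal pair $f_0(z)=z(1+z)^2/(1-z)$, $g_0(z)=z(1+z)/(1-z)$ for the sharpness of (c) and (d). Your side remark explaining why Lemma~\ref{lm7} cannot be used here (the $\mathcal{P}(1/2)$ term now enters with a negative sign) is a correct diagnosis of why the paper leaves (a) and (b) non-sharp and only conjectures the sharp values.
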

\begin{proof}
(a) Let $f\in \mathcal{F}_3$. Then
$ \left|f(z)/g(z)-1\right|<1$  if and only if $\RE\{g(z)/f(z)\}>1/2\, (z\in\mathbb{D}).$

Define the functions $p,h:\mathbb{D}\rightarrow \mathbb{C}$ by
\[p(z)=\frac{g(z)}{z}\quad \text{ and } \quad h(z)=\frac{g(z)}{f(z)}.\]
 Then $p\in\mathcal{P} $ and $h \in\mathcal{P}(1/2) $. Now

\begin{equation*}\label{e101}
 \frac{zf'(z)}{f(z)}=1+\frac{zp'(z)}{p(z)}-\frac{zh'(z)}{h(z)},
\end{equation*}
and Lemma \ref{lm3} yields
\begin{equation}\label{e6}
\left|\frac{zf'(z)}{f(z)}-1\right|\leq \frac{r(3+r)}{1-r^2}.
\end{equation}
By Lemma \ref{lm4}, the function $f$ satisfies
 \[ \left|\left(\frac{zf'(z)}{f(z)}\right)^2-1\right|\leq 1  \] provided
\[\frac{3r+r^2}{1-r^2}\leq\sqrt2-1,\] or \[\sqrt2r^2+3r+1-\sqrt2\leq0.\]
Solving this inequality leads to $r \leq R_\s.$

(b) From inequality \eqref{e6}, it follows that
 \[\RE\frac{zf'(z)}{f(z)}\leq\frac{3r+1}{1-r^2}\leq\beta\]
 if
 \[\beta r^2+3r+1-\beta\leq0,\] or  whenever $r\leq R_{\m}.$

(c) Inequality \eqref{e6} also yields

\[\RE\left(\frac{zf'(z)}{f(z)}\right)\geq\frac{1-3r-2r^2}{1-r^2} \geq\alpha\]
if
\[(2-\alpha)r^2+3r+\alpha-1\leq0.\] 
The last inequality holds if  $r\leq R_{\T}$.

To show that $R_{\T}$ is the sharp $\T$-radius for $\mathcal{F}_3$, consider the functions $f_0$ and $g_0$ defined by
\begin{equation}\label{3f0g0}
f_0(z)= \frac{z(1+z)^2}{1-z}  \quad \text{and} \quad g_0(z)=z \left(\frac{1+z}{1-z}\right).
\end{equation}
Since $ \left| f_0(z)/g_0(z)-1\right|=|z|<1$  and $  \RE \left(g_0(z)/z\right)>0$,  the function
$f_0\in \mathcal{F}_3 $. Also
\[ \RE\left(\frac{zf_0'(z)}{f_0(z)}\right)= \RE\left(\frac{1+3z-2z^2}{1-z^2}\right) =\alpha\] at  $z=-\rho:=-R_{\T}$, and
this shows that  the result in (c)  is sharp.

(d) In view of Lemma \ref{lm6}, the  circular disk \eqref{e6} lies completely  inside the parabolic region $\{w:|w-1|<\RE w\}$ if
\[\frac{r(3+r)}{1-r^2}\leq \frac{1}{2},\]
or
\[3r^2+6r-1\leq0.\]

The last inequality  holds if  $r\leq R_{\p}=R_{\mathcal{S}^*{( 1/2 )}}= 1/(4+\sqrt{17})$ for $z=-r:=-R_{\p}$.
The function $f_0$ given by \eqref{3f0g0} satisfies
\[\left|\frac{zf_0'(z)}{f_0(z)}-1\right|=\left|\frac{3r+r^2}{1-r^2} \right|=\frac{1-3r-2r^2}{1-r^2}=\RE\left(\frac{zf'_0(z)}{f_0(z)}\right).\]
Thus the radius in (d) is sharp.
\end{proof}

Let $\mathcal{F}_4$ be the class of functions $f\in \mathcal{A}$ satisfying the inequality
\[   \left|\frac{f'(z)}{g'(z)}-1\right|<1\quad ( z\in \mathbb{D})\]
for some  $g\in \mathcal{A}$ with
   $\RE g'(z)>0\quad ( z\in \mathbb{D}).$ In view of Alexander's relation between $\p$ and $\U$,   the result below  follows from Theorem ~\ref{th5b}.

\begin{theorem}\label{thm4}
For the class $\mathcal{F}_4$,    the following sharp radius results hold:
\begin{enumerate}
\item\label{i25}  the $\C$-radius  is
\[R_{\C}=\frac{2(1-\alpha)}{3+\sqrt{9+4(\alpha-2)(\alpha-1)}},\]
\item\label{i26}   the $\U$-radius  is
\[R_{\U}= R_{\mathcal{C}( 1/2)}= \frac{2\sqrt{3}-3}{3} \simeq0.154701.\]
\end{enumerate}
\end{theorem}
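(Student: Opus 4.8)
The plan is to reduce both assertions to Theorem~\ref{th5b} via the Alexander-type correspondence $f\mapsto zf'$, which interchanges $\C$ with $\T$ and $\U$ with $\p$ (the two equivalences recalled in Section~\ref{sec11}) and, the crucial point, maps the class $\mathcal{F}_4$ bijectively onto $\mathcal{F}_3$.

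First I would establish that bijection. If $f\in\mathcal{F}_4$ with associated $g$, so that $|f'(z)/g'(z)-1|<1$ and $\RE g'(z)>0$ on $\mathbb{D}$, put $F(z)=zf'(z)$ and $G(z)=zg'(z)$. Then $F,G\in\A$ (each vanishes at the origin with derivative $1$ there), and since $F(z)/G(z)=f'(z)/g'(z)$ and $G(z)/z=g'(z)$, we get $|F(z)/G(z)-1|<1$ and $\RE(G(z)/z)>0$; hence $F\in\mathcal{F}_3$. Conversely, given $F\in\mathcal{F}_3$ with associated $G$, the functions $f(z)=\int_0^z F(t)/t\,dt$ and $g(z)=\int_0^z G(t)/t\,dt$ lie in $\A$ (since $F(z)/z$ and $G(z)/z$ are analytic and equal $1$ at $0$), satisfy $zf'=F$ and $zg'=G$, and inherit $|f'/g'-1|<1$ and $\RE g'>0$; so $f\in\mathcal{F}_4$. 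Thus $f\leftrightarrow F=zf'$ is a bijection $\mathcal{F}_4\leftrightarrow\mathcal{F}_3$.

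Next I would record the compatibility with dilation. With $f_r(z)=r^{-1}f(rz)$ and $F_r(z)=r^{-1}F(rz)$, a direct computation gives $F_r(z)=zf_r'(z)$, so the correspondence commutes with restriction to $\mathbb{D}_r$; and since $zF'(z)/F(z)=1+zf''(z)/f'(z)$, we have $f_r\in\C$ precisely when $F_r\in\T$, while $f_r\in\U$ precisely when $F_r\in\p$ (apply the $\U$--$\p$ equivalence to $f_r$). Therefore $R_{\C}(\mathcal{F}_4)=R_{\T}(\mathcal{F}_3)$ and $R_{\U}(\mathcal{F}_4)=R_{\p}(\mathcal{F}_3)$. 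By Theorem~\ref{th5b}(c), $R_{\T}(\mathcal{F}_3)=2(1-\alpha)/(3+\sqrt{9+4(2-\alpha)(1-\alpha)})$, and as $(2-\alpha)(1-\alpha)=(\alpha-2)(\alpha-1)$ this is the stated $R_{\C}$; by Theorem~\ref{th5b}(d), $R_{\p}(\mathcal{F}_3)=R_{\mathcal{S}^*(1/2)}(\mathcal{F}_3)=(2\sqrt3-3)/3$, which is the stated $R_{\U}$ (consistently, setting $\alpha=1/2$ in the formula for $R_{\C}$ gives $1/(3+2\sqrt3)=(2\sqrt3-3)/3=R_{\mathcal{C}(1/2)}$).

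Finally, sharpness carries over together with the extremals. The functions $f_0(z)=z(1+z)^2/(1-z)$ and $g_0(z)=z(1+z)/(1-z)$ used in Theorem~\ref{th5b}(c),(d) correspond under the inverse map to $f\in\mathcal{F}_4$ with $f'(z)=(1+z)^2/(1-z)$ and $g'(z)=(1+z)/(1-z)$; since $1+zf''(z)/f'(z)=zf_0'(z)/f_0(z)$, the boundary computations carried out there reproduce the extremal value at the same point, so both radii are sharp. I do not anticipate a genuine obstacle: all the analytic work is already contained in Theorem~\ref{th5b}, and the only step needing care is checking that $f\mapsto zf'$ really carries $\mathcal{F}_4$ onto $\mathcal{F}_3$ and commutes with $f\mapsto f_r$, so that both the radius constant and its extremal pull back cleanly.
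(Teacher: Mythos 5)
Your proposal is correct and is precisely the route the paper takes: the paper deduces Theorem~\ref{thm4} from Theorem~\ref{th5b} in a single line via Alexander's relation, i.e.\ the correspondence $f\mapsto zf'$ carrying $\mathcal{F}_4$ onto $\mathcal{F}_3$ and interchanging $\C$ with $\T$ and $\U$ with $\p$. You have merely written out in full the details (the bijection, its compatibility with dilation, and the pull-back of the extremal pair $f_0,g_0$) that the paper leaves implicit.
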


\begin{conjecture}
  The sharp $ \s$-radius and sharp $\m$-radius for the class $\mathcal{F}_3$ are given by
\[R_{\s}=\frac{3}{2}+\frac{3}{2 \sqrt{2}}-\frac{1}{2} \sqrt{\frac{27}{2}+7 \sqrt{2}}\simeq0.142009, \quad
R_{\m}=\frac{2(\beta -1)}{3+\sqrt{9+4(\beta -1)(\beta -2)}}.\]

\end{conjecture}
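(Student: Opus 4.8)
The plan is to sharpen the crude modulus estimate \eqref{e6} on which the non-sharp assertions (a) and (b) of Theorem~\ref{th5b} rest. I keep the decomposition used there: with $p(z)=g(z)/z\in\mathcal{P}$ and $h(z)=g(z)/f(z)\in\mathcal{P}(1/2)$,
\[
\frac{zf'(z)}{f(z)}=1+\frac{zp'(z)}{p(z)}-\frac{zh'(z)}{h(z)}.
\]
Estimate \eqref{e6} applies Lemma~\ref{lm3} to \emph{both} summands and so discards the stronger hypothesis $h\in\mathcal{P}(1/2)$. The gain comes from bounding $\RE(zh'(z)/h(z))$ below by the sharp Lemma~\ref{lm7}, while retaining $\RE(zp'(z)/p(z))\le|zp'(z)/p(z)|\le 2r/(1-r^2)$ from Lemma~\ref{lm3}.

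For $R_{\m}$, when $r<1/3$ the first branch of Lemma~\ref{lm7} gives $\RE(zh'(z)/h(z))\ge -r/(1+r)$, so that
\[
\RE\frac{zf'(z)}{f(z)}\le 1+\frac{2r}{1-r^2}+\frac{r}{1+r}=\frac{1+3r-2r^2}{1-r^2}.
\]
Since the right-hand side increases with $r$, requiring it to be $\le\beta$ amounts to $(\beta-2)r^2+3r-(\beta-1)\le 0$, whose unique root in $(0,1)$ is precisely the conjectured $R_{\m}$. Sharpness comes from $f_0$ in \eqref{3f0g0}, for which $p_0(z)=(1+z)/(1-z)$ and $h_0(z)=1/(1+z)$ attain equality in Lemma~\ref{lm3} and in the first branch of Lemma~\ref{lm7} \emph{simultaneously} at $z=r$, giving $zf_0'(r)/f_0(r)=(1+3r-2r^2)/(1-r^2)=\beta$. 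This settles the formula whenever $R_{\m}\le 1/3$, i.e.\ $\beta\le 2$; for $\beta>2$ one is pushed onto the second branch of Lemma~\ref{lm7} (and, for larger $\beta$, beyond its range altogether), which no longer returns the clean quadratic, and it is there that the claim stays conjectural.

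The same $f_0$ should be extremal for $R_{\s}$. A direct computation gives
\[
\left(\frac{zf_0'(z)}{f_0(z)}\right)^2-1=\frac{z(3-z)(2+3z-3z^2)}{(1-z^2)^2},
\]
and one checks that, for $r$ up to $R_{\s}$, the modulus of the right-hand side on $|z|=r$ is largest at $z=r$, where $zf_0'(z)/f_0(z)=(1+3r-2r^2)/(1-r^2)$ is real. Thus $f_0$ first meets the lemniscate at its rightmost vertex $w=\sqrt2$, and $(1+3r-2r^2)/(1-r^2)=\sqrt2$ simplifies to $2r^2-(6+3\sqrt2)r+\sqrt2=0$, whose smaller root is the conjectured $R_{\s}$. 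Notably this threshold coincides with the one given by the sharp real-part bound above, $\RE(zf'(z)/f(z))=\sqrt2$, so the correct constant is already visible; what is missing is a proof of containment in the two-dimensional region $\{|w^2-1|<1\}$.

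The main obstacle is exactly this containment for $R_{\s}$. The two scalar bounds $\RE(zf'/f)\le(1+3r-2r^2)/(1-r^2)$ and $|zf'/f-1|\le(3r+r^2)/(1-r^2)$ do \emph{not} suffice: at $r=R_{\s}$ a point such as $w=1.40+0.15i$ satisfies both yet has $|w^2-1|>1$, lying just outside the right loop. One must therefore determine the exact boundary of
\[
\Omega_r=\left\{1+\frac{zp'(z)}{p(z)}-\frac{zh'(z)}{h(z)}:p\in\mathcal{P},\ h\in\mathcal{P}(1/2),\ |z|=r\right\},
\]
assembled from the (non-circular) range of $zp'/p$ over $\mathcal{P}$ and the $\RE$-constrained range of $zh'/h$ over $\mathcal{P}(1/2)$, and show that $\Omega_r$ leaves $\{|w^2-1|<1\}$ through the vertex $w=\sqrt2$ rather than through the concave flanks of the loop. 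Controlling this two-parameter family — together with the passage to the second branch of Lemma~\ref{lm7} for $\beta>2$ in the $\m$-case — is the crux, and is presumably why both constants are only conjectured.
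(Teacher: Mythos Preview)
There is no proof in the paper to compare against: the statement is presented as an open \emph{conjecture}, with no argument offered. What you have written is therefore not a reproduction of the paper's reasoning but genuine progress beyond it.

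Your $\m$-argument for $1<\beta\le 2$ is correct and complete. The key improvement over the paper's Theorem~\ref{th5b}(b) is that you replace the crude bound $|zh'(z)/h(z)|\le r/(1-r)$ from Lemma~\ref{lm3} by the sharp real-part estimate $\RE\bigl(zh'(z)/h(z)\bigr)\ge -r/(1+r)$ of Lemma~\ref{lm7}, which the paper records but does not use in this context. This yields exactly the conjectured quadratic $(\beta-2)r^2+3r-(\beta-1)=0$, and the extremal pair $p_0(z)=(1+z)/(1-z)$, $h_0(z)=1/(1+z)$ realises equality in both lemmas simultaneously at $z=r$, giving sharpness via the function $f_0$ of \eqref{3f0g0}. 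Since the paper states no proof, this settles the $\m$-half of the conjecture in the range $\beta\le 2$; your restriction $R_{\m}\le 1/3$ is genuine, because Lemma~\ref{lm7} switches branches there.

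For $R_{\s}$ you establish only the \emph{upper} bound: at $r=R_{\s}$ the extremal $f_0$ satisfies $zf_0'(r)/f_0(r)=\sqrt 2$, which lies on the lemniscate, so no larger radius can work. The lower bound---that every $f\in\mathcal{F}_3$ maps $|z|<R_{\s}$ into $\{|w^2-1|<1\}$---is exactly the gap you diagnose, and your counterexample $w=1.40+0.15i$ correctly shows that the half-plane bound together with the disk bound \eqref{e6} is insufficient. That is the substantive open problem, and your identification of the needed object (the full range $\Omega_r$ and how it exits the lemniscate loop) is accurate. In short: you have proved part of the conjecture and given a precise account of what remains; the paper does neither.
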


Let $\mathcal{F}_5$ be the class of all functions $f\in \mathcal{A}$ satisfying the inequality
\[   \left|\frac{f(z)}{g(z)}-1\right|<1\quad ( z\in \mathbb{D})\]
for some   convex function $g\in \mathcal{A}$.

\begin{theorem}\label{th6} 
For the class $\mathcal{F}_5$,    the following radius results hold:
\begin{enumerate}
\item[(a)]\label{i28} the $\T$-radius  is \[R_{\T}=\frac{ 1-\alpha }{1+\sqrt{2+\alpha ^2-2\alpha }},\]
\item[(b)]\label{i7}  the $\p$-radius  is \[R_{\p}=R_{\mathcal{S}^*{( 1/2 )}}=\frac{1}{\sqrt5+2}\simeq0.236068,\]
\item[(c)]\label{i8}  the $\s$-radius   is \[R_{\s}=3-2\sqrt2\simeq0.171573, \]
\item[(d)]\label{i13} the $\m$-radius  is \[R_{\m}= \frac{\beta -1}{1+\beta }.\]
\end{enumerate}
\end{theorem}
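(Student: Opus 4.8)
First I would reduce all four parts to a single disc estimate for $zf'(z)/f(z)$. Let $f\in\mathcal{F}_5$ with convex majorant $g$. Because $g$ is convex, the Marx--Strohh\"acker theorem gives $g\in\mathcal{S}^*(1/2)$, so $zg'(z)/g(z)\in\mathcal{P}(1/2)$; and $|f(z)/g(z)-1|<1$ is equivalent to $\RE\bigl(g(z)/f(z)\bigr)>1/2$, so $h(z):=g(z)/f(z)\in\mathcal{P}(1/2)$ with $h(0)=1$. Since $f=g/h$,
\[
 \frac{zf'(z)}{f(z)}=\frac{zg'(z)}{g(z)}-\frac{zh'(z)}{h(z)},
\]
and applying Lemma~\ref{lm5} (with $\alpha=1/2$) to $zg'/g$ together with Lemma~\ref{lm3} (with $\alpha=1/2$) to $h$ yields, for $|z|=r$,
\[
 \left|\frac{zf'(z)}{f(z)}-\frac{1}{1-r^2}\right|\leq\frac{r}{1-r^2}+\frac{r}{1-r}=\frac{r(2+r)}{1-r^2}.\qquad(\ast)
\]
Thus $zf'/f$ lies in the disc of centre $a:=1/(1-r^2)$ and radius $\varrho:=r(2+r)/(1-r^2)$, whose left and right endpoints are $(1-2r-r^2)/(1-r^2)$ and $a+\varrho=(1+r)/(1-r)$.

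Then each radius drops out by elementary geometry. For (a), $\RE(zf'/f)\geq\alpha$ as soon as $(1-2r-r^2)/(1-r^2)\geq\alpha$, i.e. $(1-\alpha)r^2+2r-(1-\alpha)\leq0$; rationalising the positive root gives $r\leq(1-\alpha)/\bigl(1+\sqrt{2+\alpha^2-2\alpha}\bigr)$. For (d), $a+\varrho=(1+r)/(1-r)\leq\beta$ exactly when $r\leq(\beta-1)/(\beta+1)$. For (b), in the range $r\leq1/\sqrt3$ one has $a\leq3/2$, so by Lemma~\ref{lm6} the disc $(\ast)$ lies inside $\{w:|w-1|<\RE w\}$ provided $\varrho\leq a-\tfrac12=(1+r^2)/\bigl(2(1-r^2)\bigr)$, i.e. $r^2+4r-1\leq0$, hence $r\leq\sqrt5-2=1/(\sqrt5+2)$; since setting $\alpha=1/2$ in (a) produces the same number, $R_\p=R_{\mathcal{S}^*(1/2)}$. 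For (c), one has $2\sqrt2/3<a<\sqrt2$ in the relevant range, so Lemma~\ref{lm4} gives $r_a=\sqrt2-a$, and $\varrho\leq r_a$ becomes $(1+\sqrt2)r^2+2r-(\sqrt2-1)\leq0$, whence $r\leq3-2\sqrt2$.

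The main obstacle will be the sharpness of (c) and (d). My plan is to test the pair $g_0(z)=z/(1-z)$, which is convex, and $f_0(z)=z(1+z)/(1-z)$: here $f_0(z)/g_0(z)=1+z$ maps $\mathbb{D}$ into $\{w:|w-1|<1\}$, so $f_0\in\mathcal{F}_5$, and $zf_0'(z)/f_0(z)=(1+2z-z^2)/(1-z^2)$. One then has to confirm that at the critical radius the boundary curve $\{zf_0'(z)/f_0(z):|z|=r\}$ meets the lemniscate $|w^2-1|=1$ (for (c)) and the line $\RE w=\beta$ (for (d)), so that no larger radius is admissible. Verifying this attainment -- i.e. that the one-sided bound $(\ast)$ is actually \emph{realised} in the extremal direction, not merely dominating it -- is the delicate step: pinning down the correct extremal configuration and carrying out the tangency computation carries essentially all the weight of the theorem, the inclusion halves being routine once $(\ast)$ is in hand.
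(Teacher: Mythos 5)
Your disc estimate $(\ast)$ and the four inclusion arguments coincide exactly with the paper's: the same decomposition $h=g/f\in\mathcal{P}(1/2)$ together with $zg'/g\in\mathcal{P}(1/2)$ (Marx--Strohh\"acker), the same applications of Lemmas \ref{lm5} and \ref{lm3} to get the disc of centre $1/(1-r^2)$ and radius $(2r+r^2)/(1-r^2)$, and the same quadratics via Lemmas \ref{lm4} and \ref{lm6}. That half is correct and is all the theorem actually asserts.

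The gap is in your closing paragraph, where you place ``essentially all the weight'' on a tangency computation meant to prove sharpness of (c) and (d) via $f_0(z)=z(1+z)/(1-z)$. That computation cannot succeed, and the theorem does not claim those two radii are sharp: the paper records in Conjecture \ref{i11} that the sharp $\s$- and $\m$-radii for $\mathcal{F}_5$ are believed to be \emph{strictly larger} ($\simeq 0.198912$ rather than $3-2\sqrt2\simeq0.171573$, and $(\beta-1)/\bigl(1+\sqrt{\beta^2-2\beta+2}\bigr)$ rather than $(\beta-1)/(\beta+1)$). Your own test function shows why: $zf_0'(z)/f_0(z)=1+2z/(1-z^2)$, so $\max_{|z|=r}\RE\bigl(zf_0'(z)/f_0(z)\bigr)=(1+2r-r^2)/(1-r^2)$, which reaches $\beta$ only at $r=(\beta-1)/\bigl(1+\sqrt{\beta^2-2\beta+2}\bigr)>(\beta-1)/(\beta+1)$ for $\beta>1$ --- precisely the conjectured value. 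The point is that the right-hand extremity $a+\varrho=(1+r)/(1-r)$ of the disc $(\ast)$ is not attained by any $f\in\mathcal{F}_5$ known to the authors: the two estimates feeding $(\ast)$ cannot be simultaneously extremal in that direction, and the same loss afflicts the lemniscate bound in (c). By contrast, the \emph{left} endpoint $(1-2r-r^2)/(1-r^2)$ of $(\ast)$ is attained by $f_0$ at $z=-r$, which is exactly what the paper uses to prove sharpness in (a) and (b) --- the two cases your plan passes over. So the sharpness discussion should be redirected: verify (a) and (b) at $z=-r$ with the pair \eqref{5f0g0}, and make no sharpness claim for (c) and (d).
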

\begin{proof}

(a) Let $f\in \mathcal{F}_5$.
 Then $h=g/f \in\mathcal{P}(1/2) $ and
\begin{equation}\label{eq14}
\frac{zf'(z)}{f(z)}=\frac{zg'(z)}{g(z)}-\frac{z h'(z)}{h(z)}.
\end{equation}
Since $g $ is convex, \[\RE\left(\frac{zg'(z)}{g(z)}\right)>\frac{1}{2}.\]
It follows from Lemma \ref{lm5} that
\begin{equation}\label{eq15}
\left|\frac{zg'(z)}{g(z)}-\frac{1}{1-r^2}\right|\leq\frac{r}{1-r^2}.
\end{equation}
Lemma \ref{lm3} together with \eqref{eq14} and \eqref{eq15} gives
\begin{equation}\label{ee5}
\left|\frac{zf'(z)}{f(z)}-\frac{1}{1-r^2}\right|\leq\frac{r}{1-r^2}+\frac{r}{1-r}=\frac{2r+r^2}{1-r^2}.
\end{equation}
Thus
\[\RE\left(\frac{zf'(z)}{f(z)} \right)\geq\frac{1-2r-r^2}{1-r^2}\geq\alpha \] provided
  \[(1-\alpha)r^2+2r+\alpha-1\leq0.\]
 The last inequality  holds if  $r\leq R_{\T}$.

Sharpness of the $\T$-radius for $\mathcal{F}_5$ can be seen by considering the functions $f_0$ and $g_0$ defined by
\begin{equation}\label{5f0g0}
f_0(z)=z\left(\frac{1+z}{1-z}\right) \quad \text{and} \quad g_0(z)=\frac{z}{1-z}.
\end{equation}
Since $\left| f_0(z)/g_0(z)-1\right|=|z|<1$  and $g_0$ is convex,  the function
$f_0\in\mathcal{F}_5 $. Also
\[ \frac{zf_0'(z)}{f_0(z)}= \frac{1-z^2+2z}{1-z^2}=\alpha\] at $z=-r:=-R_{\T}$.

(b) In view of Lemma \ref{lm6}, the  circular disk \eqref{ee5} lies completely  inside the parabolic region $\{w:|w-1|<\RE w\}$ when

 \[\frac{2r+r^2}{1-r^2}\leq\frac{1}{1-r^2}-\frac{1}{2},\]
 or
 \[r^2+4r-1\leq0.\]
The last inequality   holds if  $r\leq R_{\p}=R_{\mathcal{S}^*{( 1/2 )}}$ for $z=-r:=-R_{\p}.$
The function $f_0$ given by \eqref{5f0g0} satisfies
\[\left|\frac{zf_0'(z)}{f_0(z)}-1\right|=\left|\frac{2z}{1-z^2}\right|=\frac{2r}{1-r^2}=\frac{1-r^2-2r}{1-r^2}=\RE\left(\frac{zf_0'(z)}{f_0(z)}\right),\]
and so the radius in (b) is sharp.

(c) By Lemma \ref{lm4} and \eqref{ee5}, the function $f$ satisfies
 \[ \left|\left(\frac{zf'(z)}{f(z)}\right)^2-1\right|\leq 1  \] provided
\[\frac{2r+r^2}{1-r^2}\leq\sqrt2-\frac{1}{1-r^2}, \]
or
\[(\sqrt2+1)r^2+2r-\sqrt2+1\leq0.\]
Solving this inequality yields  $r \leq R_\s.$


(d) From inequality \eqref{ee5}, it follows that
\[\RE\frac{zf'(z)}{f(z)}\leq\frac{2r+r^2+1}{1-r^2}\leq\beta\]
if
\[(1+\beta)r^2+2r+1-\beta\leq0,\]
or if $r\leq R_{\m}.$
\end{proof}

\begin{conjecture}  \label{i11}The sharp $\s$-radius and  $\m$-radius for the class $\mathcal{F}_5$ are given by \[R_{\s}= -1-\sqrt{2}+\sqrt{2 \left(2+\sqrt{2}\right)}\simeq0.198912, \quad R_{\m}=\frac{(\beta -1)}{1+\sqrt{\beta ^2+2-2\beta }}.\]
\end{conjecture}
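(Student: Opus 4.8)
The plan is to obtain the conjectured constants by sharpening the region estimate \eqref{ee5}; the $\m$-radius part turns out to be almost immediate, while the $\s$-radius part hinges on one new disk estimate. For the $\m$-radius I would bypass \eqref{ee5} entirely. Starting from the decomposition \eqref{eq14}, write $h=g/f\in\mathcal{P}(1/2)$, so that $f=g/h$ with $g$ convex and $\RE(zf'/f)=\RE(zg'/g)-\RE(zh'/h)$. Since $g$ is convex, the Marx--Strohh\"{a}cker theorem gives $zg'/g\in\mathcal{P}(1/2)$, and Lemma~\ref{lm5} with $\al=1/2$ then yields $\RE\bigl(zg'(z)/g(z)\bigr)\le 1/(1-r^2)+r/(1-r^2)=1/(1-r)$; the first case of Lemma~\ref{lm7} gives $\RE\bigl(zh'(z)/h(z)\bigr)\ge -r/(1+r)$ whenever $r<1/3$. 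Hence
\[
\RE\left(\frac{zf'(z)}{f(z)}\right)\le\frac{1}{1-r}+\frac{r}{1+r}=\frac{1+2r-r^2}{1-r^2}\le\beta
\]
exactly when $(\beta-1)r^2+2r-(\beta-1)\le0$, that is, for $r\le(\beta-1)/(1+\sqrt{\beta^2+2-2\beta})=:R_\m$. This $R_\m$ satisfies $R_\m\le 1/3$ precisely for $1<\beta\le 7/4$, so in that range the hypothesis $r<1/3$ is not binding. Sharpness is witnessed by $f_0(z)=z(1+z)/(1-z)$ with $g_0(z)=z/(1-z)$: then $f_0/g_0=1+z$ and $g_0$ is convex, so $f_0\in\mathcal{F}_5$, while $zf_0'(z)/f_0(z)=(1+2z-z^2)/(1-z^2)$ equals $\beta$ at $z=R_\m$.

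For the $\s$-radius the plan is to upgrade \eqref{ee5} to
\[
\left|\frac{zf'(z)}{f(z)}-1\right|\le\frac{2r}{1-r^2}\qquad\bigl(f\in\mathcal{F}_5,\ |z|=r\le\tfrac13\bigr).
\]
Here $zg'(z)/g(z)$ lies, by Marx--Strohh\"{a}cker and Lemma~\ref{lm5}, in the disk centred at $1/(1-r^2)$ of radius $r/(1-r^2)$; together with the companion containment
\[
\left|\frac{zh'(z)}{h(z)}-\frac{r^2}{1-r^2}\right|\le\frac{r}{1-r^2}\qquad\bigl(h\in\mathcal{P}(1/2),\ |z|=r\le\tfrac13\bigr)
\]
the triangle inequality yields the estimate above, since $-1=-1/(1-r^2)+r^2/(1-r^2)$ and the two radii sum to $2r/(1-r^2)$. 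Applying Lemma~\ref{lm4} at $a=1$ (so $r_1=\sqrt2-1$, because $1>2\sqrt2/3$), the disk lies inside $\{w:|w^2-1|<1\}$ as soon as $2r/(1-r^2)\le\sqrt2-1$, i.e.\ $(\sqrt2-1)r^2+2r-(\sqrt2-1)\le0$, which holds for $r\le\sqrt{2(2+\sqrt2)}-1-\sqrt2=:R_\s\approx0.198912$; since $R_\s<1/3$ the restriction on $r$ is harmless. Sharpness is again witnessed by $f_0$: here $zf_0'(z)/f_0(z)=(1+2z-z^2)/(1-z^2)$ equals $\sqrt2$ at $z=R_\s$, so $\bigl|(zf_0'(z)/f_0(z))^2-1\bigr|=1$ there.

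The main obstacle is the companion containment for $zh'/h$, a disk-valued strengthening of the first case of Lemma~\ref{lm7} (that case records only that the left-most point $-r/(1+r)$ of the disk equals the minimum of $\RE(zh'/h)$). To prove it I would represent $h\in\mathcal{P}(1/2)$ as $h=1/(1-\omega)$ with $\omega$ a Schwarz function, $\omega(0)=0$, so that $zh'/h=z\omega'/(1-\omega)$, and then bound $\bigl|z\omega'(z)/(1-\omega(z))-r^2/(1-r^2)\bigr|$ using the Schwarz--Pick estimates $|\omega(z)|\le r$ and $|\omega'(z)|\le(1-|\omega(z)|^2)/(1-r^2)$; the extremal case ought to be $\omega(z)=z$, for which $zh'/h=z/(1-z)$ traces precisely the bounding circle. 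The threshold $r=1/3$ is expected to enter for the same reason it appears in Lemma~\ref{lm7}: for $r>1/3$ the minimum of $\RE(zh'/h)$ over $\mathcal{P}(1/2)$ already drops below $-r/(1+r)$, so the containment must fail there. Thus this argument would settle the conjectured $\s$-radius in full and the conjectured $\m$-radius for $1<\beta\le 7/4$; for $\beta>7/4$ one additionally needs the disk version of the second branch of Lemma~\ref{lm7}, and the stated formula for $R_\m$ would need re-checking in that regime.
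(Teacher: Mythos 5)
First, note that the statement you are proving is left as a \emph{conjecture} in the paper, so there is no proof of record to compare against; what follows is an assessment of whether your argument would actually settle it. Your overall strategy is sound, and the $\m$ half is essentially complete for $1<\beta\le 7/4$: the bound $\RE(zg'/g)\le 1/(1-r)$ from Marx--Strohh\"acker plus Lemma~\ref{lm5}, combined with the first branch of Lemma~\ref{lm7} applied to $h=g/f$, correctly gives $\RE(zf'/f)\le(1+2r-r^2)/(1-r^2)$, the resulting quadratic does produce the conjectured $R_{\m}$, and the pair $f_0(z)=z(1+z)/(1-z)$, $g_0(z)=z/(1-z)$ attains both extremes simultaneously at $z=r$. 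You are also right that the algebra for the $\s$ part closes: $2r/(1-r^2)\le\sqrt2-1$ yields exactly $-1-\sqrt2+\sqrt{2(2+\sqrt2)}$, and $f_0$ hits $\sqrt2$ there.

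The genuine gap is the ``companion containment'' $\bigl|zh'(z)/h(z)-r^2/(1-r^2)\bigr|\le r/(1-r^2)$ for $h\in\mathcal{P}(1/2)$, $r\le 1/3$, which you correctly identify as the crux but do not prove, and the route you sketch for it would fail as written. Writing $h=1/(1-\omega)$ and using only the modulus estimates $|\omega(z)|\le r$ and $|\omega'(z)|\le(1-|\omega(z)|^2)/(1-r^2)$ gives a disk \emph{centred at the origin} of radius $r(1+|\omega(z)|)/(1-r^2)$ for $zh'/h$, and that disk is never contained in the target disk centred at $r^2/(1-r^2)$ (its leftmost point lies at $-r(1+|\omega|)/(1-r^2)$, which is outside). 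What is actually needed is the sharper Dieudonn\'e-type information that $z\omega'(z)$ lies in the disk $|z\omega'(z)-\omega(z)|\le(r^2-|\omega(z)|^2)/(1-r^2)$, so that $zh'/h$ lies in the disk centred at $\omega/(1-\omega)$ of radius $(r^2-|\omega|^2)/\bigl((1-r^2)|1-\omega|\bigr)$, followed by a maximization over $\omega(z)=w$ with $|w|\le r$. A check along the ray $w=-s$, $0\le s\le r$, reduces the required inequality to $2r^2-r\le s^2+(r-1)s$, whose minimum over $[0,r]$ is attained at $s=r$ when $r\le 1/3$ (giving equality) but at $s=(1-r)/2$ when $r>1/3$ (where it reduces to $(3r-1)^2\le 0$ and fails); this confirms both the truth of the containment in that direction and the role of the threshold $1/3$, but the maximization over all $\arg w$ still has to be carried out to make the lemma a theorem. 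Until that lemma is established, the $\s$-radius claim and the $\m$-radius claim for $\beta>7/4$ remain open, exactly as the paper leaves them.
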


\section{Radius of Uniform Convexity}
This section considers sharp radius results for classes of functions introduced by   Ratti \cite{ratti80}.
Let $\mathcal{F}_6$ be the class of  functions $f\in \mathcal{A}$ satisfying the inequality
\[   \left|\frac{f'(z)}{g'(z)}-1\right|<1\quad ( z\in \mathbb{D})\]
for some   univalent function $g\in \mathcal{A}$.
\begin{theorem}\label{thm1}
For the class $\mathcal{F}_6$,    the following sharp radius results hold:
\begin{enumerate}
\item[(a)]\label{i19} the  $\C$-radius  is
\[R_{\C}=\frac{2(1-\alpha)}{5+\sqrt{25+4\alpha(\alpha-1)}},\]
\item[(b)]\label{i20} the   $\U$-radius  is
\[R_{\U}= R_{\mathcal{C}( 1/2)}=5-2\sqrt{6}\simeq0.101021 .\]
\end{enumerate}
\end{theorem}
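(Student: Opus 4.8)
The plan is to follow the pattern established for $\mathcal{F}_3$ and $\mathcal{F}_4$, the one new ingredient being the classical distortion inequality $\bigl|zg''(z)/g'(z)-2r^2/(1-r^2)\bigr|\le 4r/(1-r^2)$ valid for every univalent $g$ (see \cite{duren}). Given $f\in\mathcal{F}_6$, note that $|f'(z)/g'(z)-1|<1$ is equivalent to $\RE\bigl(g'(z)/f'(z)\bigr)>1/2$, so the function $h:=g'/f'$ lies in $\mathcal{P}(1/2)$; since $f'=g'/h$,
\[
\frac{zf''(z)}{f'(z)}=\frac{zg''(z)}{g'(z)}-\frac{zh'(z)}{h(z)}.
\]
Combining the distortion bound with Lemma \ref{lm3} applied with $\alpha=1/2$ (so that $|zh'(z)/h(z)|\le r/(1-r)$) gives, for $|z|=r$,
\[
\left|1+\frac{zf''(z)}{f'(z)}-\frac{1+r^2}{1-r^2}\right|\le\frac{4r}{1-r^2}+\frac{r}{1-r}=\frac{5r+r^2}{1-r^2}.
\]

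For part (a) I would read off $\RE\bigl(1+zf''(z)/f'(z)\bigr)\ge(1-5r)/(1-r^2)$ and require this to be at least $\alpha$; this reduces to $\alpha r^2-5r+(1-\alpha)\ge0$, whose smaller root is exactly $R_{\C}=2(1-\alpha)/\bigl(5+\sqrt{25+4\alpha(\alpha-1)}\bigr)$. For part (b) I would instead center the disk above at $a=(1+r^2)/(1-r^2)$ and invoke Lemma \ref{lm6}: on the relevant range of $r$ one has $1/2<a\le3/2$, so the disk lies inside the parabolic region $\{w:|w-1|<\RE w\}$ precisely when $(5r+r^2)/(1-r^2)\le a-1/2$, i.e.\ $r^2-10r+1\ge0$, giving $r\le 5-2\sqrt6$; by the R\o nning/Ma--Minda characterization of $\UCV$ this is the $\U$-radius, and one checks $5-2\sqrt6=2(1/2)/(5+\sqrt{25-1})=R_{\mathcal{C}(1/2)}$.

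For sharpness I would take $g_0(z)=z/(1-z)^2$, the Koebe function, which saturates the distortion bound along the real axis, and define $f_0$ by $f_0'(z)=(1+z)g_0'(z)$, so that $f_0'(z)/g_0'(z)-1=z$ and $f_0\in\mathcal{F}_6$. A direct computation then gives $zf_0''(z)/f_0'(z)=(5z+z^2)/(1-z^2)$, whence at $z=-R_{\C}$ one has $1+zf_0''(z)/f_0'(z)=(1-5r)/(1-r^2)=\alpha$, while at $z=-R_{\U}$ one has $|zf_0''(z)/f_0'(z)|=(5r-r^2)/(1-r^2)=(1-5r)/(1-r^2)=\RE\bigl(1+zf_0''(z)/f_0'(z)\bigr)$; these show both radii are best possible.

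The routine parts are the two quadratic inequalities and the verification that $h_0(z):=g_0'(z)/f_0'(z)=1/(1+z)$ does belong to $\mathcal{P}(1/2)$. The points that require genuine care are confirming that the hypothesis $a\le3/2$ of Lemma \ref{lm6} holds throughout the range of $r$ in question (here $a$ stays close to $1$, so this is a short estimate), and, for sharpness, checking that the single Koebe rotation $g_0$ together with the single M\"obius map $h_0$ turns every inequality in the chain into an equality simultaneously at the real point $z=-r$ — this is exactly what dictates the particular choices of $g_0$ and $h_0$ above.
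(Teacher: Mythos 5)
Your proposal is correct and follows essentially the same route as the paper: the decomposition $h=g'/f'\in\mathcal{P}(1/2)$, the univalent distortion bound, Lemma \ref{lm3}, Lemma \ref{lm6} with center $a=(1+r^2)/(1-r^2)$, and even the same extremal pair $f_0'(z)=(1+z)^2/(1-z)^3$, $g_0(z)=z/(1-z)^2$. The only addition is your explicit check that $a\le 3/2$ on the relevant range (which holds since $r\le 5-2\sqrt{6}<1/\sqrt{5}$), a detail the paper leaves implicit.
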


\begin{proof}

(a) Let $f\in \mathcal{F}_6$, and $h:\mathbb{D}\rightarrow \mathbb{C}$ be given by
\[h(z)=\frac{g'(z)}{f'(z)}.\]
 Then  $h \in\mathcal{P}(1/2) $ and
\begin{equation}\label{eq4}
\frac{zf''(z)}{f'(z)}=\frac{zg''(z)}{g'(z)}-\frac{z h'(z)}{h(z)}.
\end{equation}
   Since $g $ is univalent, it is known \cite[Theorem 2.4, p.\ 32]{duren} that
\begin{equation}\label{eq5}
\left|\frac{zg''(z)}{g'(z)}-\frac{2r^2}{1-r^2}\right|\leq\frac{4r}{1-r^2},\quad(|z|=r).
\end{equation}
Now Lemma \ref{lm3}, \eqref{eq4} and \eqref{eq5} yields
\begin{equation}\label{e15}
\left| 1+\frac{zf''(z)}{f'(z)} -\frac{1+r^2}{1-r^2}\right|\leq\frac{5r+r^2}{1-r^2}.
\end{equation}
Thus
\[\RE\left(1+\frac{zf''(z)}{f'(z)}\right) \geq\frac{1-5r}{1-r^2}\geq\alpha\]
if
\[\alpha r^2-5r+1-\alpha\geq0.\]
The last inequality holds when  $r\leq R_{\C}$.

Next consider the functions $f_0$ and $g_0$ defined by
\begin{equation}\label{6f0g0}
f'_0(z)=\frac{(1+z)^2}{(1-z)^3} \quad \text{and} \quad g_0(z)= \frac{z}{(1-z)^2}.
\end{equation}
Since $\left| f'_0(z)/g'_0(z)-1\right|=|z|<1$  and $g_0$ is univalent,  the function
$f_0 \in \mathcal{F}_6$. Also
\[1+\frac{zf''_0(z)}{f'_0(z)}=\frac{1+5z }{1-z^2}.\]
At $z=-r:=-R_{\C}$,
\[\RE\left(1+\frac{zf''_0(z)}{f'_0(z)}\right)=\frac{1-5r }{1-r^2}=\alpha.\]
This shows that  the result in (a)  is sharp.

(b) In view of Lemma \ref{lm6}, the  circular disk \eqref{e15} lies completely  inside the parabolic region $\{w:|w-1|<\RE w\}$ if
\[\frac{5r+r^2}{1-r^2}\leq\frac{1+r^2}{1-r^2}-\frac{1}{2},\]
that is, provided
\begin{equation}\label{eq6.1}
r^2 -10r+1\geq0.
\end{equation}
The last inequality   holds if  $r\leq R_{\U}=R_{\mathcal{C}( 1/2)}=5-2\sqrt{6} $ for $z=-r=-R_{\U}$.
The function $f_0$ given by \eqref{6f0g0} satisfies
\[\left| \frac{zf''_0(z)}{f'_0(z)}\right|  =\frac{r(5-r)}{1-r^2}=\frac{1-5r}{1-r^2}
 =\RE\left(1+\frac{zf''_0(z)}{f'_0(z)}\right),\]
and so the radius (b) is sharp.
\end{proof}

Let $\mathcal{F}_7$ be the class of all functions $f\in \mathcal{A}$ satisfying the inequality
\[   \left|\frac{f'(z)}{g'(z)}-1\right|<1\quad ( z\in \mathbb{D})\]
for some starlike function $g\in \mathcal{A}$.

\begin{theorem}\label{thm2}
For the class $\mathcal{F}_7$,    the following sharp radius results hold:
\begin{enumerate}
\item\label{i21} the $\C$-radius  is
\[R_{\C}=\frac{2(1-\alpha)}{5+\sqrt{25+4\alpha(\alpha-1)}},\]
\item\label{i22} the $\U$-radius  is
\[R_{\U}= R_{\mathcal{C}( 1/2)}=5-2\sqrt{6}\simeq0.101021 .\]
\end{enumerate}
\end{theorem}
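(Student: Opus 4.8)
The plan is to derive both parts from Theorem~\ref{thm1} by exploiting that every starlike function is univalent, so $\mathcal{F}_7\subseteq\mathcal{F}_6$; the only genuinely new point to settle is sharpness inside the smaller class. Thus one can either quote Theorem~\ref{thm1} directly for the inclusion direction, or rerun its proof verbatim, and then verify that the extremal functions already used there actually lie in $\mathcal{F}_7$.

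Concretely, for $f\in\mathcal{F}_7$ I would set $h(z)=g'(z)/f'(z)$, so that $h\in\mathcal{P}(1/2)$ and the identity \eqref{eq4} holds. Since $g$ is starlike it is in particular univalent, so the classical bound \eqref{eq5} applies to $zg''(z)/g'(z)$; combining this with Lemma~\ref{lm3} reproduces the disk estimate \eqref{e15}. From \eqref{e15} one reads off $\RE\bigl(1+zf''(z)/f'(z)\bigr)\ge(1-5r)/(1-r^2)$, so the requirement $\RE\bigl(1+zf''(z)/f'(z)\bigr)\ge\al$ reduces to $\al r^2-5r+1-\al\ge0$, which holds for $r\le R_{\C}$, giving (a). For (b), Lemma~\ref{lm6} shows the disk in \eqref{e15} lies inside the parabolic region $\{w:|w-1|<\RE w\}$ exactly when \eqref{eq6.1} holds, i.e. for $r\le 5-2\sqrt6$, and this number is at the same time $R_{\mathcal{C}(1/2)}$ because along the positive real axis the parabola boundary and the line $\RE w=\tfrac12$ coincide.

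The decisive observation — the reason this is a separate statement rather than a trivial restriction — is that the extremal pair from \eqref{6f0g0}, namely $f_0'(z)=(1+z)^2/(1-z)^3$ and $g_0(z)=z/(1-z)^2$, stays inside $\mathcal{F}_7$: one has $|f_0'(z)/g_0'(z)-1|=|z|<1$ and $g_0$ is the Koebe function, which is starlike. Since $1+zf_0''(z)/f_0'(z)=(1+5z)/(1-z^2)$, evaluation at $z=-R_{\C}$ gives $\RE\bigl(1+zf_0''(z)/f_0'(z)\bigr)=(1-5r)/(1-r^2)=\al$, and at $z=-R_{\U}$ the parabola boundary is attained, exactly as in the proof of Theorem~\ref{thm1}; hence neither radius can be enlarged. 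I do not anticipate any real obstacle here: the main estimate is inherited, and sharpness is immediate once one notes $g_0$ is starlike. (If one wished to avoid appealing to \eqref{eq5}, the sharper starlike distortion obtained by writing $q(z)=zg'(z)/g(z)\in\mathcal{P}$, whence $1+zg''(z)/g'(z)=q(z)+zq'(z)/q(z)$, and estimating via Lemmas~\ref{lm5} and~\ref{lm3}, leads to the very same disk — unsurprisingly, since the Koebe function is extremal for both — so no improvement over the constants of Theorem~\ref{thm1} is possible.)
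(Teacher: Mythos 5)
Your proposal is correct and is essentially the paper's own argument: the paper's proof of Theorem~\ref{thm2} simply notes that a starlike $g$ is univalent and invokes Theorem~\ref{thm1}, with sharpness inherited because the extremal $g_0(z)=z/(1-z)^2$ from \eqref{6f0g0} is the Koebe function and hence starlike. Your write-up just makes explicit the sharpness check that the paper leaves implicit.
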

\begin{proof}Since $g$ is starlike, it is univalent, and the result follows easily from Theorem ~\ref{thm1}.
\end{proof}

Let $\mathcal{F}_8$ be the class of all functions $f\in \mathcal{A}$ satisfying the inequality
\[   \left|\frac{f'(z)}{g'(z)}-1\right|<1\quad ( z\in \mathbb{D})\]
for some convex function $g\in \mathcal{A}$.

\begin{theorem}\label{thm3}
For the class $\mathcal{F}_8$,    the following radius results hold:
\begin{enumerate}
\item[(a)]\label{i23}the $\C$-radius  is
\[R_{\C}=\frac{2(1-\alpha)}{3+\sqrt{9+4\alpha(\alpha-1)}}.\]
\item[(b)]\label{i24} the $\U$-radius  is
\[R_{\U}= R_{\mathcal{C}( 1/2)}=3-2\sqrt{2}\simeq0.171573 .\]
\end{enumerate}
 The results are sharp.
\end{theorem}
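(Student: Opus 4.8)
The plan is to mirror the proof of Theorem~\ref{thm1}, but to exploit the sharper distortion estimate available for convex $g$ in place of the univalent one \eqref{eq5}. For $f\in\mathcal{F}_8$ the hypothesis $|f'(z)/g'(z)-1|<1$ is equivalent to $\RE\{g'(z)/f'(z)\}>1/2$, so I set $h(z)=g'(z)/f'(z)$ and record that $h\in\mathcal{P}(1/2)$. Writing $f'=g'/h$ and differentiating logarithmically gives
\[
\frac{zf''(z)}{f'(z)}=\frac{zg''(z)}{g'(z)}-\frac{zh'(z)}{h(z)}.
\]
Since $g$ is convex, $1+zg''(z)/g'(z)\in\mathcal{P}$, so Lemma~\ref{lm5} with $\al=0$ yields $|1+zg''(z)/g'(z)-(1+r^2)/(1-r^2)|\le 2r/(1-r^2)$, while Lemma~\ref{lm3} with $\al=1/2$ gives $|zh'(z)/h(z)|\le r/(1-r)$. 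Combining these two bounds I expect the disk
\[
\left|1+\frac{zf''(z)}{f'(z)}-\frac{1+r^2}{1-r^2}\right|\le\frac{2r}{1-r^2}+\frac{r}{1-r}=\frac{3r+r^2}{1-r^2},\qquad |z|=r,
\]
which is exactly \eqref{e15} with the linear coefficient $5$ replaced by $3$.

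From this inclusion part~(a) is immediate: taking real parts gives $\RE(1+zf''(z)/f'(z))\ge(1-3r)/(1-r^2)$, which is at least $\al$ as soon as $\al r^2-3r+1-\al\ge0$; solving this quadratic and rationalizing its smaller root produces exactly $R_{\C}=2(1-\al)/(3+\sqrt{9+4\al(\al-1)})$, with the degenerate case $\al=0$ reducing to $r\le 1/3$, which the formula also gives. For part~(b) I will apply Lemma~\ref{lm6} with $a=(1+r^2)/(1-r^2)$: the disk above lies inside $\{w:|w-1|<\RE w\}$ provided $(3r+r^2)/(1-r^2)\le a-1/2$, which rearranges to $r^2-6r+1\ge0$, i.e.\ $r\le 3-2\sqrt2$. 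A quick check that $a\le 3/2$ on this range justifies using the branch $R_a=a-1/2$ of Lemma~\ref{lm6}, and substituting $\al=1/2$ into the formula of part~(a) gives $1/(3+2\sqrt2)=3-2\sqrt2$, so indeed $R_{\U}=R_{\mathcal{C}(1/2)}$.

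For sharpness I will take $g_0(z)=z/(1-z)$, which is convex, and $f_0$ defined by $f_0'(z)=(1+z)/(1-z)^2$, so that $f_0'(z)/g_0'(z)-1=z$ and hence $f_0\in\mathcal{F}_8$. A short computation gives $1+zf_0''(z)/f_0'(z)=(1+3z)/(1-z^2)$. Evaluating at $z=-r$ shows $\RE(1+zf_0''(z)/f_0'(z))=(1-3r)/(1-r^2)$, which equals $\al$ precisely at $r=R_{\C}$, so (a) is sharp; and since $|zf_0''(z)/f_0'(z)|=(3r-r^2)/(1-r^2)$ at $z=-r$, the relation $|zf_0''/f_0'|=\RE(1+zf_0''/f_0')$ holds exactly when $r^2-6r+1=0$, showing (b) is sharp. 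None of this is harder than the estimations already carried out in the preceding theorems; the only point demanding a little attention is tracking which branch of Lemma~\ref{lm6} is in force, i.e.\ confirming that the centre $a$ stays in $(1/2,3/2]$ throughout the relevant range of $r$.
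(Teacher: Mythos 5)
Your proposal is correct and follows essentially the same route as the paper: the same decomposition $zf''/f'=zg''/g'-zh'/h$ with $h=g'/f'\in\mathcal{P}(1/2)$, the same disk $\left|1+zf''/f'-(1+r^2)/(1-r^2)\right|\le (3r+r^2)/(1-r^2)$, the same application of Lemma~\ref{lm6}, and the same extremal pair $f_0'=(1+z)/(1-z)^2$, $g_0=z/(1-z)$. The only difference is cosmetic: you derive the disk by applying Lemma~\ref{lm5} to $1+zg''/g'\in\mathcal{P}$, whereas the paper invokes the convex analogue of the distortion bound \eqref{eq5}; these give the identical estimate, and your explicit check that the centre stays in $(1/2,3/2]$ is a welcome detail the paper leaves implicit.
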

\begin{proof}
(a) The function $g$ is convex, and so is univalent. Proceeding as in the proof of Theorem \ref{thm1}, evidently

\begin{equation}\label{e18}
\left|1+\frac{zf''(z)}{f'(z)} -\frac{1+r^2}{1-r^2}\right|\leq\frac{3r+r^2}{1-r^2},
\end{equation}
which yields
\[\RE\left(1+\frac{zf''(z)}{f'(z)}\right) \geq\frac{1-3r}{1-r^2}\geq\alpha,\]
 or \[\alpha r^2-3r+1-\alpha \geq0.\]
 The last inequality holds when  $r\leq R_{\C}$.

Now consider functions $f_0$ and $g_0$ defined by
\begin{equation}\label{8f0g0}
f'_0(z)=\frac{1+z}{(1-z)^2} \quad \text{and} \quad g_0(z)= \frac{z}{1-z}.
\end{equation}
Since $\left| f'_0(z)/g'_0(z)-1\right|=|z|<1$  and $g_0$ is convex,  the function
$f_0 \in \mathcal{F}_8$. Also
\[1+\frac{zf''_0(z)}{f'_0(z)}=\frac{1+3z }{1-z^2}.\]
  At  $z=-r=-R_{\C}$, then
\[\RE\left(1+\frac{zf''_0(z)}{f'_0(z)}\right)=\RE\frac{1+3z}{1-z^2}=\frac{1-3r}{1-r^2}=\alpha.\]
This shows that  the result in (a)  is sharp.

(b) In view of Lemma \ref{lm6}, the  circular disk \eqref{e18}  lies completely  inside the parabolic region $\{w:|w-1|<\RE w\}$ if
\[\frac{3r+r^2}{1-r^2}\leq\frac{1+r^2}{1-r^2}-\frac{1}{2}\]
or whenever
\begin{equation}\label{eq6}
r^2-6r+1\geq0.
\end{equation} The last inequality holds if $r\leq R_{\U}=R_{\mathcal{C}( 1/2)}=3-2\sqrt{2}$ for $z=-r=-R_{\U}$.
The function $f_0 $ given  by \eqref{8f0g0} satisfies,
\[
\left|\left(1+\frac{zf''_0(z)}{f'_0(z)}\right)-1\right| =\frac{3r-r^2}{1-r^2} =\frac{1-3r}{1-r^2}
 =\RE\left(1+\frac{zf''_0(z)}{f'_0(z)}\right),
\]
 and so the result in (b)  is sharp.
\end{proof}

\noindent{\bf Acknowledgment.} The work presented here was supported in parts by a Research University grant from Universiti Sains Malaysia, and by a research fellowship from the Council of Scientific and Industrial Research, New Delhi.

\end{document}